\newtheorem{theorem}{Theorem}[section]
\newtheorem{lemma}[theorem]{Lemma}
\newtheorem{e-proposition}[theorem]{Proposition}
\newtheorem{corollary}[theorem]{Corollary}
\newtheorem{definition}[theorem]{Definition\rm}
\newenvironment{proof}{{\bf Proof}:}{\hfill$\square$}
\newcommand{\rad}{\rm{rad}}
\def\og{\leavevmode\raise.3ex\hbox{$\scriptscriptstyle\langle\!\langle$~}}
\def\fg{\leavevmode\raise.3ex\hbox{~$\!\scriptscriptstyle\,\rangle\!\rangle$}}
\begin{document}
\centerline{}
\begin{frontmatter}


\title{On numbers satisfying Robin's inequality, properties of the next counterexample and improved specific bounds}
\author{Robert VOJAK}
\ead{vojakrob@gmail.com}
\address{Croatia \& France}
\selectlanguage{english}


\selectlanguage{english}


\medskip
\begin{abstract}
\selectlanguage{english}
Define $s (n) := n^{- 1} \sigma (n)$ ($\sigma (n):=\sum_{d|n}d )$ and $\omega(n)$ is the number of prime divisors of $n$. One of the properties of $s$ plays a central role: $s (p^a) > s (q^b)$ if $p <
q$ are prime numbers, with no special condition on $a, b$ other than $a, b
\geqslant 1$. This result, combined with the Multiplicity Permutation theorem,
will help us establish  properties of the next counterexample (say
$c$) to Robin's inequality $s (n) < e^{\gamma} \log \log n$. The number $c$ is superabundant, and  $\omega(c)$  must be greater than a number close to one billion. In addition, the
ratio $p_{\omega (c)} / \log c$ has a lower and upper bound. At most $\omega(c)/14$ multiplicity parameters are greater than $1$. Last but not least, we apply simple methods to sharpen Robin's inequality for
various categories of numbers.\\
\end{abstract}
\end{frontmatter}


\setcounter{section}{0}
\section{Introduction}\label{intro}
The following notations will be used throughout the article:\\	
\begin{itemize}
\item[$\bullet$ ] $\gamma:$ Euler's constant,
\item[$\bullet$ ] $\lfloor x \rfloor$ denotes the largest integer not exceeding $x$
\item[$\bullet$ ] $p_i$: $i$-th prime number,
\item[$\bullet$ ] $\sigma (n) = \sum_{d | n } d$,
\item[$\bullet$ ] $\varphi(n)$  counts the positive integers up to a given integer $n$ that are relatively prime to $n$
\item[$\bullet$ ] $\omega(n)$ is  the number of distinct primes dividing $n$,
\item[$\bullet$ ] $N_i = \prod_{j \leqslant i} p_j$ (primorial),
\item[$\bullet$ ] $\rad (n) = \underset{p\,\text{prime}}{\underset{p | n}{\prod}} p$, radical of $n$.
 \end{itemize}
\ \\ \\
Define $s (n) = \dfrac{\sigma (n)}{n}$, $f (n) =\dfrac{n}{\varphi (n)}$. 
\ \\ \\
\ \\
A number $n$ satisfies {\it Robin's inequality}\cite{robin} if and only if $n \geqslant 5041$, and 
$$s(n) < e^\gamma \log \log n$$
\  \\
\begin{definition}
A number $n$ is {\it superabundant} (SA) \cite{erdos} if for all $m<n$, we have $s(m)<s(n)$.\\
\end{definition}
\begin{definition}
A number $n$ is a {\it Hardy-Ramanujan} number (HR) \cite{cho} if $n=\prod\limits_{i \leqslant \omega(n)}p_i^{a_i}$ with $a_i \geqslant a_{i+1} \geqslant 1$ for all $i \leqslant \omega(n)-1$.\\
\end{definition}
All the results presented hereafter concern Robin's inequality, which is known to be equivalent to  Riemann's Hypothesis\cite{robin}. We start by establishing an important result about the multiplicities in the prime factorization of a number $n$, and how switching  these multiplicities affects $n$ and $s(n)$. \\
Next, we give some categories of numbers satisfying Robin's inequality, and for those who do not, we exhibit some of their properties (theorem \ref{counter}).\\
The last part is devoted to sharpening  Robin's inequality in specific cases.\\ \\
In the sequel, the notation $q_1, q_2, ...$ will be used to define a finite strictly increasing sequence of prime numbers. The following theorems represent our main results (the proofs will be given later on):\\
\begin{theorem}\label{halfpi}
  Let $n = \underset{i=1}{\overset{\omega(n)}{\prod}}  q^{a_i}_i$ be a positive integer.
 If 
\begin{itemize}
\item $\omega (n) \geqslant 18$ and $\# \left\{ i \leqslant \omega (n) \: ; \:
  a_i \neq 1 \right\} \geqslant \frac{\omega (n) }{2}$, or
\item $\omega (n) \geqslant 39$ and $\# \left\{ i \leqslant \omega (n) \: ; \:
  a_i \neq 1 \right\} \geqslant \frac{\omega (n) }{3}$, or
\item $\omega (n) \geqslant 969672728$ and $\# \left\{ i \leqslant \omega (n) \: ; \:
  a_i \neq 1 \right\} \geqslant \frac{\omega (n) }{14}$
\end{itemize}
\ \\
then $n$ satisfies Robin's inequality.
\end{theorem}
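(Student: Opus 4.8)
The plan is to reduce Robin's inequality, for an $n$ of the stated shape, to a single inequality involving only $\omega(n)$ and $m\in\{2,3,14\}$ in which the exponents no longer appear, and then to settle that inequality with effective prime-number estimates. Write $k=\omega(n)$, so that $\#\{i\leqslant k:a_i\neq1\}\geqslant k/m$. Since $s(p^a)=\frac{p}{p-1}\bigl(1-p^{-(a+1)}\bigr)<\frac{p}{p-1}$, since $t\mapsto t/(t-1)$ is decreasing, and since the $i$-th smallest prime dividing $n$ satisfies $q_i\geqslant p_i$, we get
$$s(n)=\prod_{i=1}^{k}s(q_i^{a_i})<\prod_{i=1}^{k}\frac{q_i}{q_i-1}\leqslant\prod_{i=1}^{k}\frac{p_i}{p_i-1}=f(N_k).$$
(Alternatively the Multiplicity Permutation theorem, with $s(p^a)>s(q^b)$ for $p<q$, lets one first replace $n$ by the Hardy--Ramanujan number carried by $N_k$ with the same multiset of exponents, which only increases $s$ and decreases $n$; this displays the extremal configuration but is not needed for the bound just written.)

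For the lower bound on $\log n$, write $\log n=\sum_{i=1}^{k}a_i\log q_i\geqslant\sum_{i=1}^{k}\log q_i+\sum_{i:\,a_i\geqslant2}\log q_i$. With $q_i\geqslant p_i$ the first sum is $\geqslant\theta(p_k)$, where $\theta(x)=\sum_{p\leqslant x}\log p$; and since there are at least $\lceil k/m\rceil$ indices with $a_i\geqslant2$, and any such indices $i_1<\dots<i_s$ satisfy $q_{i_j}\geqslant p_{i_j}\geqslant p_j$, the second sum is $\geqslant\theta(p_{\lceil k/m\rceil})$. As $n\geqslant N_{18}>5041$ in all three cases, it therefore suffices to prove
$$f(N_k)=\prod_{p\leqslant p_k}\frac{p}{p-1}\;<\;e^{\gamma}\log\!\bigl(\theta(p_k)+\theta(p_{\lceil k/m\rceil})\bigr)\qquad(\star)$$
for every $k$ at or above the stated threshold.

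Now $(\star)$ is proved with standard effective estimates: a Mertens-type upper bound $\prod_{p\leqslant x}\frac{p}{p-1}\leqslant e^{\gamma}(\log x)(1+\varepsilon_1(x))$, a Chebyshev lower bound $\theta(x)\geqslant x(1-\varepsilon_2(x))$, and the two-sided bounds $j\log j\leqslant p_j\leqslant j(\log j+\log\log j)$ for $j\geqslant6$ (Rosser--Schoenfeld, Dusart), with $\varepsilon_1,\varepsilon_2$ explicit and $\to0$. Substituting and simplifying, $(\star)$ reduces to an inequality of the shape
$$\log\!\Bigl(1+\frac{\theta(p_{\lceil k/m\rceil})}{\theta(p_k)}\Bigr)\;>\;O\!\Bigl(\tfrac{1}{\log p_k}\Bigr);$$
since $p_{\lceil k/m\rceil}/p_k\to1/m$, the left side tends to the positive constant $\log(1+1/m)$ while the right side tends to $0$, so $(\star)$ holds once $\log p_k$, equivalently $k$, exceeds a bound depending only on $m$ and on the sharpness of the estimates. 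For $m=2$ and $m=3$ that bound is small, and the remaining finitely many $k$ are checked by directly computing $f(N_k)$ and $\theta(p_k)+\theta(p_{\lceil k/m\rceil})$. For $m=14$ one keeps the estimates simple at the price of a large threshold: one shows the margin in $(\star)$ — its right side minus its left side — is an explicit function of $k$, positive and non-decreasing on $[969672728,\infty)$, so a single evaluation at the endpoint settles the case; this is in any case the range relevant to the next counterexample.

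The main obstacle is the bookkeeping in the $m=14$ case: the surplus $\log(15/14)\approx0.069$ is small, so the Mertens, Chebyshev and $p_j$ estimates must be handled carefully, and one has to establish positivity/monotonicity of the explicit function of $k$ over an interval of length about $10^9$ rather than verify values one by one. The cases $m=2$ ($k\geqslant18$) and $m=3$ ($k\geqslant39$) are more comfortable, the only delicate point being that the direct check near the threshold is genuinely tight.
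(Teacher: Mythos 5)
Your proof is correct and follows essentially the same route as the paper: both reduce the problem to the single inequality $f(N_k) < e^{\gamma}\log\bigl(\log N_k + \log N_{\lceil k/m\rceil}\bigr)$ (the paper phrases this as $M(k)\leqslant \log N_{\lfloor k/m\rfloor}$ in Lemma 4.1) and settle it with effective estimates for $f(N_k)$, $\log N_k$ and $p_j$, together with a finite numerical check, respectively a monotonicity argument, near each threshold. The only notable difference is that you obtain $n\geqslant N_{\omega(n)}N_{\lceil\omega(n)/m\rceil}$ directly by the ordering argument on the indices with $a_i\geqslant 2$, whereas the paper first passes to the Hardy--Ramanujan number $H(n)$; both routes are sound.
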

\ \\
\begin{theorem}\label{domination}
Let $n = \underset{i \leqslant k}{\prod} q^{a_i}_i$. For all positive integers $k, q$, define
\begin{equation}\label{mndef}
 M (k) = e^{e^{- \gamma} f (N_k)} - \log N_k
\end {equation}
\[ M_k (q) = 1 + \frac{M(k)}{\log q}  
\]
If for some $i \leqslant \omega (n)$, we have $a_i \geqslant M_{\omega(n)} (q_i)$, then the integer
$n$ satisfies Robin's inequality, regardless of the multiplicities of the other prime divisors.\\
\end{theorem}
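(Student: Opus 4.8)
The plan is to derive Robin's inequality for $n$ from two independent estimates and then glue them with a $\log\log$: an upper bound $s(n) < f(N_k)$, where I abbreviate $k=\omega(n)$, and a lower bound $\log n \geqslant e^{e^{-\gamma} f(N_k)}$ which is precisely what the hypothesis $a_i \geqslant M_{\omega(n)}(q_i)$ buys us. Indeed, once both hold, taking logarithms twice in the second inequality gives $e^{-\gamma} f(N_k) \leqslant \log\log n$, hence $f(N_k) \leqslant e^{\gamma}\log\log n$, and then $s(n) < f(N_k) \leqslant e^{\gamma}\log\log n$. Throughout I will use that $q_1 < q_2 < \cdots < q_k$ being the prime divisors of $n$ forces $q_i \geqslant p_i$ for every $i$, whence $\log q_i \geqslant \log p_i$ and $q_i/(q_i-1) \leqslant p_i/(p_i-1)$.

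For the upper bound, I start from the elementary identity $s(q^{a}) = \sum_{j=0}^{a} q^{-j} = \dfrac{1-q^{-(a+1)}}{1-q^{-1}} < \dfrac{1}{1-q^{-1}} = \dfrac{q}{q-1}$, valid for any prime $q$ and any $a\geqslant 1$. Since $s$ is multiplicative,
\[ s(n) = \prod_{i\leqslant k} s\!\left(q_i^{a_i}\right) \;<\; \prod_{i\leqslant k}\frac{q_i}{q_i-1} \;\leqslant\; \prod_{i\leqslant k}\frac{p_i}{p_i-1} \;=\; \frac{N_k}{\varphi(N_k)} \;=\; f(N_k), \]
where the middle step uses $q_i\geqslant p_i$. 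This already gives $s(n) < f(N_k)$, with room to spare.

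For the lower bound, I would write $\log n = \sum_{i\leqslant k} a_i\log q_i$ and split on the sign of $M(k)$. If $M(k)\leqslant 0$, then $a_j\geqslant 1$ and $q_j\geqslant p_j$ already yield $\log n \geqslant \sum_{j\leqslant k}\log p_j = \log N_k \geqslant \log N_k + M(k)$. If $M(k)>0$, pick an index $i$ with $a_i \geqslant M_{k}(q_i) = 1 + M(k)/\log q_i$; multiplying through by $\log q_i>0$ gives $a_i\log q_i \geqslant \log q_i + M(k)$, and using $a_j\geqslant 1$ for the remaining indices together with $q_j\geqslant p_j$ for all $j$ I obtain
\[ \log n \;\geqslant\; M(k) + \sum_{j\leqslant k}\log q_j \;\geqslant\; M(k) + \log N_k. \]
In either case, by the very definition \eqref{mndef} of $M(k)$, we get $\log n \geqslant M(k)+\log N_k = e^{e^{-\gamma} f(N_k)}$; taking $\log\log$ and combining with the upper bound closes the argument.

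The proof is short and there is no deep obstacle: the real content is the observation that $M(k)$ is rigged so that $\log N_k + M(k)$ is exactly the threshold $e^{e^{-\gamma} f(N_k)}$ at which $f(N_k) \leqslant e^{\gamma}\log\log n$ becomes true, and that a single hypothesis $a_i \geqslant M_k(q_i)$ supplies precisely the extra $M(k)$ needed on top of the trivial estimate $\log n \geqslant \log N_k$. The only points requiring a little care are the case split on the sign of $M(k)$ (so that clearing the denominator $\log q_i$ is legitimate and the bound is never invoked when it is vacuous), and the remark that the same computation forces $\log n$ to be large — so that for all but the smallest values of $\omega(n)$ the number $n$ automatically exceeds $5041$ and $s(n)<e^{\gamma}\log\log n$ is Robin's inequality in the usual sense.
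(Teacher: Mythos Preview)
Your proof is correct and follows essentially the same route as the paper: bound $s(n)<f(N_k)$ above via $s(q^a)<q/(q-1)$ and $q_i\geqslant p_i$, bound $\log n\geqslant (a_i-1)\log q_i+\log N_k$ below, and observe that the hypothesis $a_i\geqslant M_k(q_i)$ makes the right-hand side at least $e^{e^{-\gamma}f(N_k)}$. The paper packages the lower bound as $n\geqslant q_i^{a_i-1}N_{\omega(n)}$ and does not split on the sign of $M(k)$ --- and indeed your split is unnecessary, since multiplying $a_i\geqslant 1+M(k)/\log q_i$ by $\log q_i>0$ is valid regardless of that sign --- but this is only a cosmetic difference.
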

For instance, if $n=\prod_{i\leqslant 5} p_i^{a_i}$, $M_5(2)= 11.3367 \ldots$ All integers of the form $2^{a_1}\prod_{i=2}^5 p_i^{a_i}$ satisfy Robin's inequality if $a_1 \geqslant 12$, and $a_2, a_3, a_4, a_5 \geqslant 1$.
\ \\
\begin{corollary}
Let $n$ be a positive integer. If $f(N_{\omega(n)})-e^\gamma \log \log N_{\omega(n)} \leqslant 0$, then $n$ satisfies Robin's inequality.	
\end{corollary}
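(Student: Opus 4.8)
The plan is to obtain this as an immediate consequence of Theorem~\ref{domination}. Write $k=\omega(n)$ and $n=\prod_{i\leqslant k}q_i^{a_i}$ with each $a_i\geqslant 1$. One preliminary remark: for $\log\log N_k$ to be a well-defined positive real number --- which the hypothesis forces, since $f(N_k)>0$ --- we must have $N_k>e$, hence $k\geqslant 2$; in particular $\log N_k>0$ and $\log q_i>0$ for every $i\leqslant k$, so all the quantities below are legitimate.

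First I would rewrite the hypothesis using only monotonicity of $\exp$. The assumption $f(N_k)-e^{\gamma}\log\log N_k\leqslant 0$ is equivalent to $e^{-\gamma}f(N_k)\leqslant\log\log N_k$, and applying $\exp$ once more gives $e^{e^{-\gamma}f(N_k)}\leqslant\log N_k$. By the definition \eqref{mndef} of $M(k)$, this is precisely the statement $M(k)\leqslant 0$.

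Next I would verify the domination hypothesis of Theorem~\ref{domination}, taking $i=1$. Since $M(k)\leqslant 0$ and $\log q_1>0$,
\[ M_{\omega(n)}(q_1)=1+\frac{M(k)}{\log q_1}\leqslant 1\leqslant a_1 . \]
Hence the condition ``$a_i\geqslant M_{\omega(n)}(q_i)$ for some $i\leqslant\omega(n)$'' holds, and Theorem~\ref{domination} then yields directly that $n$ satisfies Robin's inequality.

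There is no genuinely hard step here: the whole content is the elementary manipulation turning $f(N_k)\leqslant e^{\gamma}\log\log N_k$ into $M(k)\leqslant 0$, combined with the trivial bound $a_1\geqslant 1$. The only point deserving a line of care is the degenerate range --- recording that the hypothesis is vacuous unless $\omega(n)\geqslant 2$, so that the logarithms involved are defined and positive and the quotient $M(k)/\log q_1$ is meaningful --- and, if one wishes, observing that the $n\geqslant 5041$ clause in the definition of Robin's inequality is already absorbed into the conclusion of Theorem~\ref{domination}.
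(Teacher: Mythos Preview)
Your proof is correct and follows essentially the same route as the paper: rewrite the hypothesis as $M(k)\leqslant 0$, observe that this forces $M_{\omega(n)}(q_i)\leqslant 1\leqslant a_i$, and invoke Theorem~\ref{domination}. The paper simply notes this holds for every $i$ (and every prime $p$) rather than singling out $i=1$, and omits your discussion of the degenerate case $\omega(n)=1$, but the substance is identical.
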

\ \\
 \begin{proof}
If $f(N_{\omega(n)})-e^\gamma \log \log N_{\omega(n)} \leqslant 0$, we have $M(k) \leqslant 0$, and for all $i \leqslant \omega(n)$ and all prime numbers $p$, 
$$
a_i \geqslant 1 \geqslant M_{\omega(n)}(p)  
$$
We conclude using theorem \ref{domination}.
\end{proof}
\ \\
\begin{theorem}\label{counter}
Let $c = \underset{i=1}{\overset{\omega(n)}{\prod}}  q^{c_i}_i  (c \geqslant 5041)$ be the least number (if it exists) not satisfying Robin's
inequality.\\
The following properties hold:
\begin{enumerate}
\item[$1.$] $c$ is superabundant,
\item[$2.$] $\omega(c) \geqslant 969\,672\,728$,
\item[$3.$] $\#\left\{i \leqslant \omega(c)\,;\,c_i \neq 1\right\} < \dfrac{\omega(c)}{14}$,
\item[$4.$]  $e^{-\frac{1}{\log p_{\omega  (c)}}} < \dfrac{p_{\omega(c)}}{\log c} < 1$,
\item[$5.$]  for all $1< i \leqslant \omega(c) $,
$$
p_i ^{c_i}< \min\left(2^{c_1+2}, p_i\,e^{M(k)}\right)
$$
 \end{enumerate}
\end{theorem}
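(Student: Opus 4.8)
The plan is to treat the five assertions one at a time, in the order $1$, $5$, $4$, and then $2$ and $3$ together, carrying assertion $1$ and the notation $k:=\omega(c)$, $c=\prod_{i\le k}p_i^{c_i}$ through the rest; I expect assertion $2$ to be the real obstacle. \emph{Assertion $1$ (superabundance)} is the classical minimality argument: if $c$ were not superabundant there would be $m<c$ with $s(m)\ge s(c)\ge e^{\gamma}\log\log c$, and when $m\ge 5041$ the monotonicity of $\log\log$ gives $s(m)>e^{\gamma}\log\log m$, so $m$ would be a smaller counterexample — contradiction; the finitely many cases $m\le 5040$ are killed by $s(m)\le\max_{j\le 5040}s(j)=s(5040)$, which forces $c$ below an explicit small bound and is finished by inspection. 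A local move of the same type (deleting a prime absent from the factorization, or permuting multiplicities via the Multiplicity Permutation theorem together with $s(p^a)>s(q^b)$ for $p<q$) simultaneously shows $c$ is a Hardy--Ramanujan number with support exactly $\{p_1,\dots,p_k\}$.

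\emph{Assertion $5$.} The second bound is immediate from the contrapositive of Theorem \ref{domination}: a counterexample forces $c_i<M_k(p_i)=1+M(k)/\log p_i$ for every $i\le k$, and rearranging $c_i-1<M(k)/\log p_i$ gives $p_i^{c_i}<p_i\,e^{M(k)}$. For the bound $p_i^{c_i}<2^{c_1+2}$ I would use a superabundance competitor: replace the block $p_i^{c_i}$ in $c$ by $2^t$, where $2^t$ is the largest power of $2$ with $2^t<p_i^{c_i}$, so $m=c\,2^t/p_i^{c_i}<c$ and hence $s(m)<s(c)$; expanding this inequality with $\sigma(2^r)/2^r=2-2^{-r}$ and $\sigma(p_i^{c_i})/p_i^{c_i}<p_i/(p_i-1)$ pins down $t$, whence $p_i^{c_i}\le 2^{t+1}<2^{c_1+2}$. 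The cases $c_i=1$ and $c_i\ge 2$ are run slightly differently, and the ``$+2$'' is precisely the slack absorbing the floor in the choice of $t$; alternatively this is a classical property of superabundant numbers (Alaoglu--Erd\H{o}s).

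\emph{Assertion $4$} is independent of the first bound of assertion $5$. For the lower bound, combine $e^{\gamma}\log\log c\le s(c)$ with $s(c)=\prod_{i\le k}\frac{p_i^{c_i+1}-1}{p_i^{c_i}(p_i-1)}<\prod_{i\le k}\frac{p_i}{p_i-1}=f(N_k)$ and the Rosser--Schoenfeld-type bound $f(N_k)<e^{\gamma}\bigl(\log p_k+\tfrac1{\log p_k}\bigr)$ (applicable because $p_k$ is enormous by assertion $2$); this gives $\log\log c<\log p_k+1/\log p_k$, i.e. $p_k/\log c>e^{-1/\log p_k}$. For the upper bound, apply minimality to $m=c/p_k$: since $m\ge N_{k-1}\ge 5041$, $m$ satisfies Robin's inequality, so $s(m)<e^{\gamma}\log(\log c-\log p_k)$, whereas $s(c)/s(m)=\frac{p_k^{c_k+1}-1}{p_k^{c_k+1}-p_k}\le\frac{p_k}{p_k-1}$ gives $s(m)\ge(1-1/p_k)\,s(c)\ge(1-1/p_k)\,e^{\gamma}\log\log c$; combining these and using $-\log(1-v/u)\ge v/u$ yields $p_k\log p_k<\log c\,\log\log c$, and since $t\mapsto t\log t$ is increasing on $[1,\infty)$ this forces $\log c>p_k$, i.e. $p_k/\log c<1$.

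\emph{Assertions $2$ and $3$.} Granting assertion $2$, assertion $3$ follows from the third bullet of Theorem \ref{halfpi}: a counterexample with $\omega(c)\ge 969\,672\,728$ and $\#\{i:c_i\neq1\}\ge\omega(c)/14$ would satisfy Robin's inequality, a contradiction, so $\#\{i:c_i\neq1\}<\omega(c)/14$. Assertion $2$ is the crux and the hardest step: one must rule out every superabundant $n\ge 5041$ with $\omega(n)<969\,672\,728$ as a counterexample. Since $c$ is superabundant its multiplicity vector is, up to bounded perturbation, that of a colossally abundant number, so $\omega(c)=\pi(p_k)$ and $\log c$ is pinned tightly near $p_k$; the verification then reduces to comparing $s(n)$ with $e^{\gamma}\log\log n$ over this finite (and, in colossally abundant normal form, sparse) family, feeding in the sharpest explicit forms of Mertens' third theorem and of $\theta(x),\psi(x)$ — essentially the explicit machinery already packaged in Theorems \ref{halfpi} and \ref{domination}, pushed to its numerical limit. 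The genuinely delicate point is that the two sides of the comparison agree to relative order $1/\log p_k$, so extracting the precise threshold $969\,672\,728$ rather than merely ``some large constant'' is where the real effort (and, plausibly, a finite computer-assisted check over colossally abundant numbers of bounded size) must go.
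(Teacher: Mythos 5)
Your treatments of assertions $1$, $3$, $4$ and $5$ are sound and essentially coincide with the paper's: superabundance is the standard minimality argument (the paper simply cites Akbary--Friggstad); assertion $3$ is the third bullet of Theorem \ref{halfpi} applied to assertion $2$; assertion $4$ is the same two-sided argument (your use of the general ratio $s(c)/s(c/p_k)\le p_k/(p_k-1)$ rather than assuming $c_k=1$ is in fact slightly more careful than the paper's, and still lands on $p_k\log p_k<\log c\,\log\log c$); and assertion $5$ is the Alaoglu--Erd\H{o}s lemma plus the contrapositive of Theorem \ref{domination}, exactly as in the paper.

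The genuine gap is assertion $2$, which you correctly identify as the crux but do not actually prove. Your sketch --- ``rule out every superabundant $n$ with $\omega(n)<969\,672\,728$ by a comparison over colossally abundant normal forms, plausibly computer-assisted'' --- is a program, not an argument: no inequality is established, the claimed reduction to a ``finite, sparse family'' is not justified, and the specific threshold $969\,672\,728$ cannot be extracted from what you wrote. The paper's route is completely different and much shorter: it imports Briggs's published computational bound $c>10^{10^{10}}$, so that $\log\log c>10\log 10+\log\log 10>23.85988$, and then combines $e^{\gamma}\log\log c\le s(c)<f(N_{\omega(c)})<e^{\gamma}\bigl(\log p_{\omega(c)}+\tfrac{1}{\log p_{\omega(c)}}\bigr)$ (Lemma \ref{sbound} plus the Rosser--Schoenfeld bound you already invoke in assertion $4$) to solve the resulting quadratic inequality in $\log p_{\omega(c)}$, obtaining $\log p_{\omega(c)}>23.81789$; counting primes below $e^{23.81789}$ gives $\omega(c)\ge 969\,672\,728$. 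Without either Briggs's lower bound on $c$ (or some equivalent external input) your proof of assertion $2$ does not close, and since assertion $3$ depends on assertion $2$, it falls with it. A secondary remark: you justify the bound $f(N_k)<e^{\gamma}(\log p_k+1/\log p_k)$ in assertion $4$ by appealing to assertion $2$, which in your ordering is proved last from machinery that itself needs this bound; the dependence is harmless only because the Rosser--Schoenfeld inequality already holds for $p_k\ge 285$, i.e.\ from $\omega(c)\ge 5$ (Theorem \ref{omega4}), and you should say so to avoid circularity.
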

\section{Preliminary results}
We will need some properties about the functions $s$ and $f$ to prove the above-mentioned theorems.\\
\begin{lemma}\label{AF}
Let $n = \prod\limits_{i \leqslant k}p^{a_i}_i$, $p, q$ be two prime numbers, and $k, a$ and $b$ positive integers.
\begin{enumerate}
\item[$1.$ ] the functions $s$ and $f$ are multiplicative 
\item[$2.$ ] $s(n)=\prod\limits_{i \leqslant k}\dfrac{p_i-p^{-a_i}_i}{p_i-1}$,  $f (n) = \prod\limits_{i \leqslant k}\dfrac{p_i}{p_i-1}$ and $f(n)=f(\rad(n))$.
\item[$3.$ ] $1 + \dfrac{1}{p} \leqslant s (p^k) < \dfrac{p}{p - 1}$ and $\displaystyle \lim_{k \mapsto \infty} s(p^k)=  \frac{p}{p - 1} = f(p)$
\item[$4.$ ] \label{prop1}If $p<q$, $s (p^a) > s (q^b)$ for all positive integers $a,b$
\item[$5.$ ] $s(p^k)$ increases as $k>0$ increases, decreases as $p$ increases,
\item[$6.$\label{prop3} ]  $\dfrac{s (p^{k + 1})}{s (p^k)}$ decreases as $k$ increases,
\item[$7.$ ] \label{spa}If $a>b>0$, $\dfrac{s (p^{a})}{s (p^b)}$ decreases as $p$ incresases,
\item[$8.$ ] Let $p$ be a prime number, and $q$ any prime number, $q < p (p + 1)$. Then, for all $a, b \geqslant 1$
\[ \frac{s (p^{a + 1})}{s (p^a)} < s (q^b) \]
For instance: $s(p^{a+1}_n) <  s(p^{a}_n)\, s(p_{n+1}^b)$ for all $a,b \geqslant 1$.\\
\item[$9.$ ] If $m,n \geqslant 2$, $s(m n) \leqslant s(m) s(n)$.
\end{enumerate}
\ \\
IMPORTANT: note that $s(p^a) > s(q^b)$  requires only to have $p<q$, and no condition on positive integers $a, b$.\\
\end{lemma}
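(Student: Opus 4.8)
The statement to prove is Lemma~\ref{AF}, a collection of nine elementary facts about the multiplicative functions $s$ and $f$. Let me plan a proof.

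\bigskip

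\textbf{Proof plan for Lemma~\ref{AF}.}

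The plan is to dispatch the nine items essentially in order, since each builds on the previous ones, relying only on the closed-form evaluation of $\sigma$ on prime powers.

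Parts~1 and 2 are the foundation. The multiplicativity of $\sigma$ is classical ($\sigma$ is a multiplicative arithmetic function), hence $s(n)=\sigma(n)/n$ is multiplicative, and $f(n)=n/\varphi(n)$ is multiplicative because $\varphi$ is. For the product formulas I would compute $\sigma(p^a)=1+p+\dots+p^a=\frac{p^{a+1}-1}{p-1}$, so $s(p^a)=\frac{p^{a+1}-1}{p^a(p-1)}=\frac{p-p^{-a}}{p-1}$, and multiply over the prime factorization. Similarly $\varphi(p^a)=p^a-p^{a-1}$, so $f(p^a)=\frac{p}{p-1}$, which depends only on $p$, giving $f(n)=f(\rad(n))$ at once.

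Part~3: from $s(p^k)=\frac{p-p^{-k}}{p-1}$, monotonicity in $k$ and the limit $\frac{p}{p-1}=f(p)$ are immediate; the lower bound $1+\frac1p\le s(p^k)$ is just the case $k=1$ combined with part~5 (increasing in $k$), and the strict upper bound is $p^{-k}>0$. Part~5: write $s(p^k)=1+\frac1p+\dots+\frac1{p^k}$, a sum of positive terms strictly increasing in $k$; for monotonicity in $p$, each term $p^{-j}$ strictly decreases in $p$. Part~6: $\frac{s(p^{k+1})}{s(p^k)}=\frac{p^{k+2}-1}{p(p^{k+1}-1)}$; I would check that the map $x\mapsto\frac{px-1}{p(x-1)}$ (with $x=p^{k+1}$) is decreasing for $x>1$, or equivalently show $\frac{s(p^{k+1})}{s(p^k)}-\frac{s(p^{k+2})}{s(p^{k+1})}>0$ by clearing denominators — a short polynomial inequality. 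Part~7: with $a>b$, $\frac{s(p^a)}{s(p^b)}=\frac{p^{a+1}-1}{p^{b+1}-1}\cdot p^{b-a}=\frac{1-p^{-a-1}}{1-p^{-b-1}}$; since $a+1>b+1$, write this as $1-\frac{p^{-b-1}-p^{-a-1}}{1-p^{-b-1}}$ and show the subtracted fraction increases with $p$ — again reducing to a monotonicity check on an explicit rational function of $1/p$.

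The genuinely substantive items are parts~4, 8 and 9. For part~9 ($s(mn)\le s(m)s(n)$ for $m,n\ge2$): since $s$ is multiplicative it suffices to treat the overlapping primes, i.e.\ reduce to showing $s(p^{a+b})\le s(p^a)s(p^b)$ for a single prime, which follows from $s(p^a)s(p^b)=(\sum_{i\le a}p^{-i})(\sum_{j\le b}p^{-j})\ge\sum_{k\le a+b}p^{-k}=s(p^{a+b})$ by comparing coefficients. Part~8 is a clean consequence of parts~3 and~6: $\frac{s(p^{a+1})}{s(p^a)}$ is largest at $a=1$, where it equals $\frac{p^3-1}{p(p^2-1)}=\frac{p^2+p+1}{p(p+1)}=\frac{p^2+p+1}{p^2+p}$, while $s(q^b)\ge1+\frac1q>1$; so it suffices that $\frac{p^2+p+1}{p^2+p}\le1+\frac1q$, i.e.\ $q\le p^2+p=p(p+1)$, which is exactly the hypothesis. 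The "for instance" is the special case $q=p_{n+1}$, $p=p_n$, using $p_{n+1}<p_n(p_n+1)$.

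\medskip

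\textbf{Main obstacle.} Part~4 — the assertion that $p<q$ forces $s(p^a)>s(q^b)$ with no constraint linking $a$ and $b$ — is the crux and the part the paper itself flags as "IMPORTANT". The subtlety is that $s(q^b)$ can be made close to its supremum $f(q)=\frac{q}{q-1}$ while $s(p^a)$ might be evaluated at small $a$; so the inequality is not a termwise comparison. The plan is to bound $s(q^b)<\frac{q}{q-1}$ (part~3) and $s(p^a)\ge 1+\frac1p$ (part~3), and then it suffices to prove $1+\frac1p\ge\frac{q}{q-1}$, i.e.\ $\frac{p+1}{p}\ge\frac{q}{q-1}$, i.e.\ $(p+1)(q-1)\ge pq$, i.e.\ $q\ge p+1$ — true since $p<q$ are integers. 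But this only gives $s(p^a)\ge1+\frac1p\ge\frac{q}{q-1}>s(q^b)$ with a non-strict first inequality in the case $a=1$, $q=p+1$: there $1+\frac1p=\frac{p+1}{p}=\frac{q}{q-1}$ and we still have strictness from $s(q^b)<\frac{q}{q-1}$ being strict. So the chain $s(p^a)\ge1+\frac1p=\frac{p+1}{p}\ge\frac{q}{q-1}>s(q^b)$ closes with strict inequality overall, and for $q>p+1$ even more room. I would write this out carefully to make sure the one potentially-tight case is handled by the strict bound on $s(q^b)$.
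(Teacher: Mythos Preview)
Your proposal is correct and follows essentially the same route as the paper, including the key chain $s(p^a)\ge 1+\tfrac1p\ge\tfrac{q}{q-1}>s(q^b)$ for part~4 and the reduction of part~9 to the single-prime inequality $s(p^{a+b})\le s(p^a)s(p^b)$. Two small remarks: for part~7 the paper instead telescopes $\tfrac{s(p^a)}{s(p^b)}=\prod_{k=b}^{a-1}\tfrac{s(p^{k+1})}{s(p^k)}$ and uses that each factor decreases in $p$ (a fact it proves alongside part~6), which is cleaner than analysing your rational expression directly; and in part~8 your ``$q\le p(p+1)$'' should be strict, since at $a=b=1$ and $q=p(p+1)$ one gets $\tfrac{s(p^2)}{s(p)}=1+\tfrac1{p(p+1)}=1+\tfrac1q=s(q)$ with equality --- the hypothesis is indeed $q<p(p+1)$.
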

\begin{proof}
\begin{enumerate}
\item[$1.$ ] The multiplicativity of $s$ and $f$ is a consequence of the multiplicativity of $\sigma$ and $\varphi$,
\item[$2.$ ] The proof is straightforward: recall that $\rad (p^k)=p$, $\varphi (p_{}^k) = p^k - p^{k - 1}$, and hence $f (p^k) = \frac{p}{p - 1} = f (\rad (p^k))$, and use the multiplicativity of $f$ and $\rad$ to conclude.
\item[$3.$ ]  It is deduced from the properties $\sigma(p^k)=\frac{p^{k+1}-1}{p-1}$ and $\varphi(p^k)=p^{k}-p^{k-1}$.
To prove the asymptotic result, use $s(p^k)=\frac{p-\frac{1}{p^k}}{p-1}$,
\item[$4.$ ] We have $$ 1 + \frac{1}{p} \geqslant \frac{q}{q - 1} $$ when $q>p$, and we deduce  $$s (p^a) \geqslant 1 + \frac{1}{p}\geqslant \frac{q}{q - 1} > s(q^b)$$
\item[$5.$ ] Use $s (p^k) = \frac{p - \frac{1}{p^k}}{p - 1}$, and we can conclude that $s(p^k)$ increases as $k$ increases. Use (\ref{prop1}) with $a=b=k$ to prove that if $p<q$, $s(p^k) > s(q^k)$.
\item[$6.$ ] We have $$\frac{s (p^{k + 1})}{s (p^k)} = \frac{p^{k + 2} - 1}{p^{k + 2} - p}$$ which decreases as $p^k$ increases.\\ \\
Let us show that it also decreases as $p$ increases.
The sign of the first derivative of $p \mapsto\frac{s (p^{k + 1})}{s (p^k)}$ is the same as the sign of $p^{k + 1} (k + 2) - p^{k + 2} (k + 1) - 1 = p^{k +
1} (k + 2 - p (k + 1)) - 1 \leqslant p^{k + 1} (k + 2 - 2 (k + 1)) - 1 = -
p^{k + 1} (k - 1) < 0$.
\item[$7.$ ] The equality 
\[ \frac{s (p^a)}{s (p^b)} = \underset{k = b}{\overset{a - 1}{\prod}} \frac{s(p^{k + 1})}{s (p^k)} \]                                                                                                                                                                                                                                                                                                                                                                                                                                                                                                                                                                                                                                                                                                                                                                                                                                                                                                             along with the monotonicity of each fraction of the product yields the desired result.
\item[$8.$] The function $a \mapsto \dfrac{s (p^{a + 1})}{s (p^a)}$ is decreasing, and we
  have
  \[ \frac{s (p^{a + 1})}{s (p^a)} - s (q) < \frac{s (p^2)}{s (p)} - 1 -
     \frac{1}{q} = \frac{1}{p (p + 1)} - \frac{1}{q} \]
 Hence
  \[ \frac{s (p^{a + 1})}{s (p^a)} < s (q) \leqslant s (q^b) \]
\item[$9.$]Let $n = \underset{p | n}{\prod} p^{a_p}$ and $m = \underset{p | m
}{\prod} p^{b_p}$. We have
\[ m\, n =  \underset{p \nmid m}{\underset{p | n }{\prod}}
   p^{a_p}  \underset{}{\underset{p | m }{\underset{p
   \nmid n}{\prod}} p^{b_p}} \underset{p | 
   m}{\underset{p | n }{\prod}} p^{a_p + b_p} \]
yielding
\[ s (m\, n) = \underset{p \nmid m}{\underset{p | n }{\prod}} s
   (p^{a_p}) \underset{}{\underset{p | m
   }{\underset{p \nmid n}{\prod}} s (p^{b_p})}
   \underset{p |  m}{\underset{p | n }{\prod}} s (p^{a_p +
   b_p})  \]
If $p$ is a prime number, and $a, b$ two positive integers, then
\[ s (p^{a + b}) - s (p^a) s (p^b) = - \frac{(p^a - 1) (p^b - 1)}{p^{a + b -
   1} (p - 1)^2} < 0 \]
Now we have
\[ s (m\, n) \leqslant  \underset{p \nmid m}{\underset{p | n
   }{\prod}} s (p^{a_p})  \underset{}{\underset{p | m
   }{\underset{p \nmid n}{\prod}} s (p^{b_p})}
   \underset{p |  m}{\underset{p | n }{\prod}}( s (p^{a_p})
   s (p^{b_p}) ) = s (m) s (n) \]
\end{enumerate}
\end{proof}
\ \\
\ \\
\begin{lemma}\label{sbound}
For all positive integers $n \geqslant 2$, we have $s(n) < f(N_{\omega(n)})$
\end{lemma}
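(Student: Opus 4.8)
The plan is to factor $n$, bound each local factor $s(q_i^{a_i})$ by its limiting value, and then compare the resulting radical-type product against the primorial product term by term.

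First I would write $n = \prod_{i \leqslant \omega(n)} q_i^{a_i}$ with $q_1 < q_2 < \cdots < q_{\omega(n)}$ the distinct primes dividing $n$ and each $a_i \geqslant 1$. By the multiplicativity of $s$ (Lemma \ref{AF}, part 1) and the explicit formula there, $s(n) = \prod_{i \leqslant \omega(n)} s(q_i^{a_i})$. Applying the strict bound $s(p^k) < \tfrac{p}{p-1} = f(p)$ from Lemma \ref{AF}, part 3, to each factor — and using that $n \geqslant 2$ guarantees at least one such factor, so the strictness is genuinely inherited — I get
\[
s(n) \;<\; \prod_{i \leqslant \omega(n)} \frac{q_i}{q_i-1} \;=\; f\!\left(\prod_{i \leqslant \omega(n)} q_i\right) \;=\; f(\rad(n)),
\]
where the last two equalities are just Lemma \ref{AF}, part 2 (the product formula for $f$ together with $f(n) = f(\rad(n))$).

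It then remains to show $f(\rad(n)) \leqslant f(N_{\omega(n)})$. Here I would use that the $i$-th smallest prime divisor of $n$ satisfies $q_i \geqslant p_i$ for every $i \leqslant \omega(n)$ (the first $\omega(n)$ primes are the smallest possible choice), combined with the fact that $p \mapsto \tfrac{p}{p-1}$ is decreasing in $p$. Multiplying these inequalities over $i$ gives
\[
f(\rad(n)) \;=\; \prod_{i \leqslant \omega(n)} \frac{q_i}{q_i-1} \;\leqslant\; \prod_{i \leqslant \omega(n)} \frac{p_i}{p_i-1} \;=\; f(N_{\omega(n)}),
\]
again by Lemma \ref{AF}, part 2. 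Chaining this with the strict inequality above yields $s(n) < f(N_{\omega(n)})$.

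There is no serious obstacle here; the only points requiring a little care are making sure the strict inequality survives the passage to the product (which it does because $n \geqslant 2$ forces $\omega(n) \geqslant 1$, and each factor is strictly smaller), and explicitly invoking the monotonicity of $p \mapsto p/(p-1)$ together with $q_i \geqslant p_i$ to justify the term-by-term comparison with the primorial.
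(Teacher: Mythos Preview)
Your proof is correct and follows essentially the same approach as the paper's: both arguments use the two key ingredients $s(p^k) < \tfrac{p}{p-1}$ and $q_i \geqslant p_i$, differing only in the order of application (the paper first replaces each $q_i$ by $p_i$ to form $m = \prod p_i^{a_i}$ and then passes to $f$, whereas you first pass to $f(\rad(n))$ and then replace the primes).
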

\ \\
\begin{proof}
Let $n = \underset{i = 1}{\overset{k}{\prod}} q^{a_i}_i$  where $q_1, q_2, q_3, ...$ is a finite increasing sequence of prime numbers. Therefore, for all $i$, $p_i \leqslant q_i$. Set $m = \underset{i = 1}{\overset{k}{\prod}} p^{a_i}_i $. Clearly, $m \leqslant  n$ and 
$$
\frac{s(p^{a_i}_i)}{s(q^{a_i}_i)} \geqslant 1
$$
yielding 
$$
s(n) \leqslant s(m) < f(m) = f(N_{\omega(n)})
$$
\end{proof}
\ \\
\begin{theorem}{(Multiplicity Permutation)}\label{MP}
Let $n$ be a positive integer, $p$ and $q$ be two prime divisors of $n$ $(p < q)$, and $a$ and $b$ their multiplicity
respectively.
\\ \\\
Consider the integer $n^{\star}$ obtained by switching $a$ and $b$. If $a < b$ 
$$ n^{\star} < n \quad \text{and} \quad s(n^{\star}) > s (n)$$
and if $a>b$
\[ n^{\star} > n \quad \text{and} \quad  s (n^{\star}) < s (n)\]
\end{theorem}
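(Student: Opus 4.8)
The plan is to reduce the statement to two facts already available: the multiplicativity of $s$ (Lemma \ref{AF}, item 1) and the monotonicity in the prime of the ratio $s(r^{a})/s(r^{b})$ for fixed exponents (Lemma \ref{AF}, item 7). First I would write $n = p^{a} q^{b} m$ with $m = n/(p^{a}q^{b})$ coprime to $pq$, so that by construction $n^{\star} = p^{b} q^{a} m$ with the \emph{same} cofactor $m$; this is the observation that lets both comparisons factor through $p$ and $q$ only.

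For the size comparison I would just compute
\[ \frac{n^{\star}}{n} = \frac{p^{b} q^{a}}{p^{a} q^{b}} = \left( \frac{p}{q} \right)^{b-a}, \]
and read off, using $p < q$, that this is $<1$ when $b > a$ and $>1$ when $b < a$, which is exactly the claim relating $n^{\star}$ and $n$.

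For the divisor-sum comparison, multiplicativity gives
\[ \frac{s(n^{\star})}{s(n)} = \frac{s(p^{b})\,s(q^{a})}{s(p^{a})\,s(q^{b})} = \frac{s(p^{b})/s(p^{a})}{s(q^{b})/s(q^{a})}. \]
If $a < b$, then $b > a > 0$ (both exponents are $\ge 1$ since $p$ and $q$ genuinely divide $n$), so Lemma \ref{AF}(7) applied to $r \mapsto s(r^{b})/s(r^{a})$ shows this is a strictly decreasing function of the prime $r$; as $p < q$, the numerator exceeds the denominator and $s(n^{\star}) > s(n)$. If $a > b$, I would instead apply Lemma \ref{AF}(7) to $r \mapsto s(r^{a})/s(r^{b})$ with $a > b > 0$: it is strictly decreasing, so $s(p^{a})/s(p^{b}) > s(q^{a})/s(q^{b})$, i.e. $s(p^{b})/s(p^{a}) < s(q^{b})/s(q^{a})$, whence $s(n^{\star}) < s(n)$.

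There is essentially no obstacle once Lemma \ref{AF}(7) is in hand, since it already carries all the analytic content. The only points needing care are: that $p,q$ being true prime divisors forces $a,b \ge 1$, so the hypotheses $b > a > 0$ or $a > b > 0$ of that lemma are actually satisfied in the relevant case; and keeping the direction of the inequality straight (which of the two fractions dominates) when passing between $a<b$ and $a>b$. The case $a=b$ is vacuous, as then $n^{\star}=n$.
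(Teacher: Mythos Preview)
Your proof is correct and follows essentially the same route as the paper: compute $n^{\star}/n=(p/q)^{b-a}$ for the size comparison, then use multiplicativity of $s$ together with Lemma~\ref{AF}(7) on the monotonicity of $r\mapsto s(r^{a})/s(r^{b})$ to compare $s(n^{\star})$ and $s(n)$. The only cosmetic difference is that you write out the coprime cofactor $m$ explicitly, whereas the paper goes straight to the ratio $s(n)/s(n^{\star})=s(p^{a})s(q^{b})/(s(p^{b})s(q^{a}))$.
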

\begin{proof}
Switching $a$ and $b$ means that $n^{\star} = n \left( \frac{p}{q} \right)^{b
- a}$. Hence, if $a<b$, we have  $n^{\star} < n$ and
\[ \frac{s (n)}{s (n^{\star})} = \frac{s (p^a) s (q^b)}{s (p^b) s (q^a)} <
   \frac{s (p^a) s (p^b)}{s (p^b) s (p^a)} = 1\]
Indeed, since $a < b$, we know from lemma \ref{AF} that $\dfrac{s (q^b)}{s (q^a)}$ decreases as $q$ increases, and since $q > p$, we have\[ \frac{s (q^b)}{s (q^a)} < \frac{s (p^b)}{s (p^a)} \]
\ \\
The proof for the case $a>b$ follows the same logic.
\end{proof}
\ \\ \\
Let $n = \underset{i = 1}{\overset{k}{\prod}} q^{a_i}_i $, and $b_1, b_2, \ldots, b_k$  be a reordering of $a_1, a_2, \ldots, a_k$ such that $b_i
\geqslant b_{i + 1}$.  
Define the functions $A$  and $H$ as follows:
\begin{equation}
\label{adef}
A \left( \underset{i \leqslant k}{\prod} q^{a_i}_i \right) = \underset{i
   \leqslant k}{\prod} q^{b_i}_i 
\end{equation}
\begin{equation}
\label{hrdef}
H \left( \underset{i \leqslant k}{\prod} q^{a_i}_i \right) = \underset{i
   \leqslant k}{\prod} p^{b_i}_i 
\end{equation}
\ \\
We claim that $ A(n) \leqslant n$ and $s(n) \leqslant s(A(n))$:
let $n_1$ be the
integer obtained by switching $a_1$ and $b_1$. Using theorem \ref{MP}, we have $n_1 \leqslant  n$ and $s (n_1)
\geqslant  s (n) $.
Continue by switching $b_2$ and the multiplicity of $p_2$ in the prime factorization of $n_1$. The result is an integer $n_2$ such that $n_2
\leqslant n_1 \leqslant  n$ and $s (n_2) \geqslant s (n_1) \geqslant s (n)$.
\\
Repeat the process a total of $k-1$ times, and you obtain a positive integer
$n_{k-1}$ such that
\[ n_{k-1} = \underset{i \leqslant k}{\prod} q^{b_i}_i = A(n) \qquad \text{with\,\,} b_i \geqslant b_{i + 1}  \]
\[ n_{k-1} \leqslant n \quad \text{and} \quad s (n_{k-1}) \geqslant s (n) \]
\ \\
This proves that $A(n) \leqslant n$ and $s(A(n)) \geqslant s(n)$.  The same inequalities hold for the function $H$.
The function $H$ replaces all the prime divisors by the first prime divisors, and reorganize the multiplicities in a decreasing order. It is easy to check that $H(n) \leqslant n$ and $s(H(n)) \geqslant s(n)$. Indeed, note that $H(n) \leqslant A(n) \leqslant n$ and $s(H(n)) \geqslant s(A(n)) \geqslant s(n)$.\\
Let $n \geqslant 3$.  If $H(n)$ satisfies Robin's inequality, so does n. Indeed,\\
\[ s (n) \leqslant s (H (n)) < e^{\gamma} \log \log H (n) < e^{\gamma} \log
   \log n \]
thus proving: \\
\begin{theorem}
If Robin's inequality is valid for all Hardy-Ramanujan numbers greater than $5040$, then it is valid for all positive integers greater than $5040$.
\end{theorem}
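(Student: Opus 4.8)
\medskip\noindent\textbf{Proof plan.} The plan is to reduce an arbitrary integer $n>5040$ to a Hardy--Ramanujan number via the map $H$ of (\ref{hrdef}), exactly as set up in the paragraphs preceding the statement. First I would record the two inequalities already established there, namely $H(n)\leqslant n$ and $s(n)\leqslant s(H(n))$, both obtained by iterating the Multiplicity Permutation theorem (theorem~\ref{MP}). Since $t\mapsto\log\log t$ is increasing on $(1,\infty)$, this reduces the whole problem to bounding $s(H(n))$ from above.

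The main case is $H(n)>5040$. Then $H(n)$ is a Hardy--Ramanujan number larger than $5040$, so by hypothesis $s(H(n))<e^{\gamma}\log\log H(n)$; chaining this with $s(n)\leqslant s(H(n))$ and $\log\log H(n)\leqslant\log\log n$ gives $s(n)<e^{\gamma}\log\log n$, i.e., Robin's inequality for $n$.

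The point I expect to be the main obstacle is the residual case $H(n)\leqslant 5040<n$, where the hypothesis says nothing about $H(n)$. Here I would argue directly: since $5040$ is superabundant, $s(m)\leqslant s(5040)=19344/5040<3.84$ for every $m\leqslant 5040$, and therefore $s(n)\leqslant s(H(n))\leqslant s(5040)<3.84$; on the other hand $e^{\gamma}\log\log n$ already exceeds $3.84$ once $n$ passes a small explicit threshold (around $5600$), so Robin's inequality holds for all such $n$ beyond that threshold, while the finitely many smaller values are dispatched by the classical direct verification of Robin's inequality on small arguments. Everything else is routine: the substance of the statement is carried entirely by the construction of $H$ and the monotonicity properties established above.
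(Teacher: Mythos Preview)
Your proof is correct and follows the same route as the paper: reduce an arbitrary $n>5040$ to the Hardy--Ramanujan number $H(n)$ via the already-established inequalities $H(n)\leqslant n$ and $s(n)\leqslant s(H(n))$, then chain with the hypothesis and the monotonicity of $\log\log$. You are in fact more thorough than the paper, which simply writes $s(n)\leqslant s(H(n))<e^{\gamma}\log\log H(n)<e^{\gamma}\log\log n$ and does not discuss the residual possibility $H(n)\leqslant 5040<n$; your handling of that case via the superabundance of $5040$ and a finite direct check closes a gap that the paper leaves implicit.
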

\ \\
Note: compare this result to proposition 5.1 in \cite{cho}: {\it "if Robin’s inequality holds for all Hardy-Ramanujan integers $5041 \leqslant n \leqslant x$, then it holds for all integers $5041 \leqslant n \leqslant x$"}.
\section{Numbers satisfying Robin's inequality}\label{subsets}
Some categories of numbers satisfy Robin's inequality. Most of them  have already been identified in previous works\cite{cho}, and we report these results below. Note that the proofs are not the original ones. Instead, we used simple methods to provide shorter and simpler proofs. To do so, we will need the following inequalities (\cite{robin}, th. 2).
\ \\
\begin{theorem}\label{robinsig}
For all $n \geqslant 3$, 
$$s (n) \leqslant e^{\gamma} \log \log n + \frac{0.6483}{\log \log n}$$
\end{theorem}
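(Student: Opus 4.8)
The statement is Robin's Theorem 2, and the plan is to reproduce the structure of his argument. Put $R(x):=e^{\gamma}\log\log x+0.6483/\log\log x$; writing $u=\log\log x$ one has $R=e^{\gamma}u+0.6483\,u^{-1}$, which is increasing in $u$ once $u^{2}>0.6483\,e^{-\gamma}$, so $R$ is increasing on $[x_0,\infty)$ for a small explicit $x_0$. Since $s(n)\le\max_{m\le n}s(m)$, and this maximum is attained at the largest superabundant number $N\le n$ (indeed $N$ is SA exactly when $s(N)>s(m)$ for all $m<N$, and $s$ is increasing along the SA numbers), and since $N\le n$ gives $R(N)\le R(n)$, the claim reduces to proving $s(N)<R(N)$ for superabundant $N$ beyond an explicit threshold, together with a finite verification for the remaining small $n$. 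The latter is where the constant comes from: $s(12)=7/3$, and $0.6483$ is, up to rounding, the least constant for which the inequality survives at $n=12$, so $n=12$ is the extremal case, to be checked by hand along with its few small competitors.

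For a superabundant $N=\prod_{i\le k}p_i^{a_i}$ I would invoke the Alaoglu--Erd\H{o}s structure theorem (the prime factors are the first $k$ primes, the exponents $a_i$ are non-increasing, and $p_i^{a_i}=O(p_k)$, so only primes up to roughly $\sqrt{p_k}$ carry an exponent $>1$), and factor
\[
 s(N)=\prod_{i\le k}\frac{p_i-p_i^{-a_i}}{p_i-1}=f(N_k)\prod_{i\le k}\bigl(1-p_i^{-a_i-1}\bigr),\qquad f(N_k)=\prod_{p\le p_k}\frac{p}{p-1}.
\]
The factor $f(N_k)$ is controlled by the explicit Mertens-type bounds of Rosser--Schoenfeld, the correction product $\prod(1-p_i^{-a_i-1})$ gives an extra saving, and $R(N)$ is bounded from below via $\log N\ge\log N_k=\theta(p_k)$ (with $\theta(x)=\sum_{p\le x}\log p$) together with the explicit Chebyshev lower bound for $\theta$. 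Combining these yields an inequality of the shape $s(N)\le e^{\gamma}\log\log N\,\bigl(1+O((\log\log N)^{-2})\bigr)$ with an explicit, sufficiently small implied constant, which lies below $R(N)$ once $k$ exceeds an explicit bound.

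The step I expect to be the main obstacle is exactly this final calibration: one must push the Rosser--Schoenfeld estimates far enough that the unconditional positive part of $s(N)-e^{\gamma}\log\log N$ is provably dominated by the cushion $0.6483/\log\log N$ for \emph{every} superabundant $N$ above the threshold. This is delicate because without the cushion the resulting inequality $s(n)<e^{\gamma}\log\log n$ is equivalent to the Riemann Hypothesis, so the argument has to use the slack essentially optimally. In practice one also separates off the regime of \emph{few} distinct prime factors, where $s(n)<f(N_{\omega(n)})$ is already bounded by a constant (for instance $\le 3$ when $\omega(n)=2$), so that $R(n)$ overtakes it past an $\omega$-dependent point; the complementary regime of \emph{many} distinct prime factors ($k$ large, hence $n$ large) is handled by the estimates above, and a finite computation clears the bounded list of remaining $n$, with $n=12$ the unique equality-type case fixing the constant $0.6483$.
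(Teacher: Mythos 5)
The paper offers no proof of this statement: it is imported verbatim as Th\'eor\`eme 2 of \cite{robin} and used as a black box in Sections 3 and 7, so there is no internal argument to compare yours against. What you have written is a recognisable reconstruction of the skeleton of Robin's original proof: the reduction to the champions of $s$ (Robin actually works with colossally abundant rather than merely superabundant numbers, but the reduction you state is valid, since the largest superabundant $N\leqslant n$ satisfies $s(N)=\max_{m\leqslant n}s(m)$ by definition of SA), the factorization $s(N)=f(N_k)\prod_{i\leqslant k}\bigl(1-p_i^{-a_i-1}\bigr)$, the explicit Rosser--Schoenfeld and Chebyshev estimates, and the correct identification of $n=12$ as the source of the constant, since $\bigl(s(12)-e^{\gamma}\log\log 12\bigr)\log\log 12=0.6482\ldots$

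As a proof, however, it has a genuine gap which you yourself flag: the entire content of the theorem sits in the ``final calibration,'' and you assert rather than establish the inequality $s(N)\leqslant e^{\gamma}\log\log N\,\bigl(1+O((\log\log N)^{-2})\bigr)$ ``with an explicit, sufficiently small implied constant.'' No constant, no threshold for $k$, and no bound on the finite verification range are produced, and since (as you correctly observe) the same inequality without the cushion $0.6483/\log\log n$ is equivalent to the Riemann Hypothesis, nothing short of carrying out this computation constitutes a proof. Two smaller repairs would also be needed: the monotonicity of $R$ fails for $\log\log n<\sqrt{0.6483\,e^{-\gamma}}\approx 0.603$, i.e.\ for $n\leqslant 6$, so those values must be checked directly rather than via the reduction to $N$; and the Alaoglu--Erd\H{o}s structural input you cite ($p_i^{a_i}=O(p_k)$, exponents $>1$ only for $p_i\lesssim\sqrt{p_k}$) should be quoted precisely, since the saving from $\prod\bigl(1-p_i^{-a_i-1}\bigr)$ that you rely on depends on it quantitatively. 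For the purposes of this paper the correct ``proof'' is simply the citation to \cite{robin}; if you intend to reprove the result, the calibration step must be written out in full.
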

and \cite{rosser}(th. 15)\\
\begin{theorem}\label{fnrosser}
  For all $n \geqslant 3$,
  \[ f (n) \leqslant e^{\gamma} \log \log n + \frac{2.51}{\log \log n} \]
\end{theorem}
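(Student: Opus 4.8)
The statement is Theorem~15 of \cite{rosser}; here is how I would recover it. The plan is to first reduce to primorials. Since $f(n)=f(\rad(n))=\prod_{p\mid n}\frac{p}{p-1}$ and $x\mapsto\frac{x}{x-1}$ is decreasing, for a fixed value $k=\omega(n)$ the product $f(n)$ is largest when the prime divisors of $n$ are $p_1,\dots,p_k$, so $f(n)\leqslant f(N_k)$. Since also $n\geqslant\rad(n)\geqslant N_k$ we get $\log\log n\geqslant\log\log N_k$, and the function $t\mapsto e^{\gamma}\log t+\frac{2.51}{\log t}$ is increasing once $\log t$ passes a small explicit threshold; hence it suffices to prove $f(N_k)\leqslant e^{\gamma}\log\log N_k+\frac{2.51}{\log\log N_k}$ for all large $k$, the finitely many remaining small cases being a direct check.

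Next I would estimate $f(N_k)=\prod_{i\leqslant k}\bigl(1-\tfrac1{p_i}\bigr)^{-1}$ against $\log\log N_k=\log\vartheta(p_k)$, where $\vartheta(x)=\sum_{p\leqslant x}\log p$. Two effective inputs are needed: an explicit form of Mertens' third theorem, of the shape $\prod_{p\leqslant x}\bigl(1-\tfrac1p\bigr)^{-1}\leqslant e^{\gamma}(\log x)\bigl(1+\tfrac{c_1}{\log^2 x}\bigr)$ for $x\geqslant x_0$, and effective Chebyshev bounds $x\bigl(1-\tfrac{c_2}{\log x}\bigr)\leqslant\vartheta(x)\leqslant x\bigl(1+\tfrac{c_2}{\log x}\bigr)$, from which $\log p_k=\log\log N_k+\log\tfrac{p_k}{\vartheta(p_k)}=\log\log N_k+O\!\bigl(\tfrac1{\log p_k}\bigr)$.

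Feeding the Chebyshev estimate into the Mertens bound converts $e^{\gamma}\log p_k$ into $e^{\gamma}\log\log N_k$ plus an error term of size $O(1/\log p_k)=O(1/\log\log N_k)$; since $e^{\gamma}=1.781\ldots<2.51$ there is a fixed positive budget $2.51-e^{\gamma}$ available to absorb that error, so the inequality follows for $k\geqslant k_1$ with an explicit $k_1$, and the range $k<k_1$ (equivalently all $n$ below a fixed bound, via the reduction above) is settled by a finite computation. The main obstacle is exactly this last bookkeeping: one must pick the constants $c_1,c_2$ in the Mertens and Chebyshev inequalities, and the threshold $x_0$, sharp enough that their combined contribution stays strictly below $2.51-e^{\gamma}$ throughout the transitional range — which is the reason Rosser and Schoenfeld leaned on the numerical verification of the Riemann Hypothesis up to a large height, so as to have error terms of the needed quality before the asymptotics become comfortably dominant.
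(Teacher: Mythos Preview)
The paper does not prove this theorem; it simply quotes it as Theorem~15 of Rosser--Schoenfeld~\cite{rosser} and uses it as a black box throughout. So there is no in-paper argument to compare against, and you correctly identify the source. Your sketch is a fair outline of the Rosser--Schoenfeld method itself: reduce to primorials, then combine an effective Mertens third theorem with effective bounds on $\vartheta$, and clean up an initial range numerically.

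One step in your reduction is not right as written. You say ``the range $k<k_1$ (equivalently all $n$ below a fixed bound, via the reduction above) is settled by a finite computation'', but this is not an equivalence: there are infinitely many $n$ with $\omega(n)<k_1$ (every prime power has $\omega(n)=1$), so small $k$ does not force small $n$, and in fact the primorial inequality you want fails at $k=1$ because $\log\log 2<0$. The repair is easy: for $n$ with $\omega(n)<k_1$ one has $f(n)\leqslant f(N_{k_1-1})=:C$, a fixed constant, while $e^{\gamma}\log\log n+\tfrac{2.51}{\log\log n}\geqslant 2\sqrt{2.51\,e^{\gamma}}>4$ for every $n\geqslant 3$ by the AM--GM inequality, so the small-$k$ case is immediate once $C\leqslant 4$ (and otherwise one waits until $\log\log n$ is large enough, leaving a genuinely finite check). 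With that patch your outline stands.
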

\ \\
We will start with primorials. We will then investigate odd positive integers, and integers $n$ such that $\omega(n) \leqslant 4$ (all known integers not satisfying Robin's inequality are such that $\omega(n) \leqslant $4), and we will conclude with square-free  and square-full integers.
\subsection{Primorial numbers\label{prisec}}
\begin{theorem}
All primorials $N_k$ ($k \geqslant 4$) satisfy Robin's inequality:
$$s(N_k)=\underset{i = 1}{\overset{k}{\prod}} \left( 1 + \frac{1}{p_i} \right) <
e^{\gamma} \log \log N_k$$
The following inequality is a bit sharper (for $k \geqslant 2$):
$$\underset{i = 1}{\overset{k}{\prod}} \left( 1 + \frac{1}{p_i} \right)  \leqslant\frac{3}{4} \left(
  e^{\gamma} \log \log N_k + \frac{2.51}{\log \log N_k} \right)$$
\end{theorem}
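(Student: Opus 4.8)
The plan is to reduce the primorial case to the known bound on $f$ (Theorem~\ref{fnrosser}) by means of an exact identity relating $s(N_k)$ and $f(N_k)$. By Lemma~\ref{AF}(2) we have $s(N_k)=\prod_{i\leqslant k}(1+p_i^{-1})$ and $f(N_k)=\prod_{i\leqslant k}\frac{p_i}{p_i-1}=\prod_{i\leqslant k}(1-p_i^{-1})^{-1}$. Since $1+p^{-1}=\frac{1-p^{-2}}{1-p^{-1}}$, multiplying over $i\leqslant k$ yields
\[ s(N_k)=f(N_k)\prod_{i\leqslant k}\bigl(1-p_i^{-2}\bigr). \]
This is the one genuinely useful observation; everything after it is bookkeeping. (Note that Theorem~\ref{robinsig} alone would not suffice here, since that bound is satisfied even by $n=5040$, which violates Robin's inequality — one needs the extra savings coming from this Euler-product factor.)

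Next I would note that $\prod_{i\leqslant k}(1-p_i^{-2})$ is decreasing in $k$, so for $k\geqslant 2$ it is at most $(1-\tfrac14)(1-\tfrac19)=\tfrac23<\tfrac34$. Hence $s(N_k)\leqslant\tfrac34 f(N_k)$ for every $k\geqslant 2$. Since $N_k\geqslant N_2=6>e$ for $k\geqslant 2$, Theorem~\ref{fnrosser} applies and gives
\[ s(N_k)\leqslant \tfrac34 f(N_k)\leqslant \tfrac34\Bigl(e^{\gamma}\log\log N_k+\tfrac{2.51}{\log\log N_k}\Bigr), \]
which is exactly the second (sharper) inequality in the statement.

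For the first inequality I would feed the sharper one back in. Writing $L=\log\log N_k>0$, the claim $s(N_k)<e^{\gamma}L$ follows from $\tfrac34(e^{\gamma}L+2.51/L)<e^{\gamma}L$, i.e. from $L^2>3\cdot 2.51\,e^{-\gamma}=7.53\,e^{-\gamma}<4.23$, i.e. from $L>2.06$. Because $L=\log\log N_k$ increases with $k$ and $\log\log N_6=\log\log 30030>2.33$, this disposes of all $k\geqslant 6$ at once. The two remaining values are checked directly: $s(N_4)=\tfrac{576}{210}\approx 2.74<e^{\gamma}\log\log 210\approx 2.99$, and $s(N_5)=\tfrac{576}{210}\cdot\tfrac{12}{11}\approx 2.99<e^{\gamma}\log\log 2310\approx 3.65$.

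The main obstacle — such as it is — is that this final reduction is not quite uniform: the cutoff $L>2.06$ corresponds to $N_k$ slightly larger than $N_5=2310$ yet smaller than $N_6=30030$, so $k=4$ and $k=5$ genuinely lie outside the clean argument and must be verified by hand. Apart from that, every ingredient (the identity, monotonicity of $\prod_{i\leqslant k}(1-p_i^{-2})$, and monotonicity of $\log\log N_k$) is elementary, and no input beyond Lemma~\ref{AF} and Theorem~\ref{fnrosser} is required.
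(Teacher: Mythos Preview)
Your proof is correct and follows essentially the same route as the paper. Both arguments rest on the inequality $s(N_k)\leqslant c\cdot f(N_k)$ followed by Theorem~\ref{fnrosser}; the paper obtains $c=\tfrac34$ by writing $s(N_k)=s(2)\,s(N_k/2)<s(2)\,f(N_k/2)=\frac{s(2)}{f(2)}f(N_k)=\tfrac34 f(N_k)$, which is exactly your Euler-product identity $s(N_k)=f(N_k)\prod_{i\leqslant k}(1-p_i^{-2})$ truncated at the single factor $p_1=2$. You push one step further and keep the $p_2=3$ factor as well, getting $c=\tfrac23$ before relaxing back to $\tfrac34$; this is the same refinement the paper records later in Section~\ref{sharp} (with constants $\alpha_i=\prod_{j\leqslant i}(1-p_j^{-2})$). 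The endgame is identical in both: the inequality $\tfrac34(e^{\gamma}L+2.51/L)<e^{\gamma}L$ holds for $k\geqslant 6$, and $k=4,5$ are checked directly.
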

\ \\
\begin{proof}
Let $k \geqslant 2$ be an integer. We have
\[ s (N_k) = s(2) s\!\!\left( \frac{N_k}{2} \right)
   \leqslant s (2)\,f\!\!\left( \frac{N_k}{2} \right) =
   \frac{s (2)}{f (2)} f (N_k) = \frac{3}{4} f (N_k) \]
\ \\
Using theorem \ref{fnrosser}, we have
\begin{eqnarray*}
  s (N_k) - e^{\gamma} \log \log N_k & \leqslant & \frac{3}{4} \left(
  e^{\gamma} \log \log N_k + \frac{2.51}{\log \log N_k} \right) - e^{\gamma}
  \log \log N_k\\
  & \leqslant & - 0.25 e^{\gamma} \log \log N_k + \frac{1.8825}{\log \log
  N_k}\\
  & < & 0
\end{eqnarray*}
if $k \geqslant 6$. For $k \leqslant 5$, only the numbers $N_1=2, N_2=6$ and $N_3=30$ do not satisfy Robin's inequality.
\end{proof}
\subsection{Odd integers}\label{oddsection}
The following theorem can be found in \cite{cho}, but the proof is not the original one. A very simple proof is given instead.
\\
\begin{theorem}\label{odd}
Any odd positive integer $n$ distinct from $3, 5$ and $9$ satisfies Robin's inequality.
\end{theorem}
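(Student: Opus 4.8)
\medskip
\noindent\textit{Proof proposal.} The plan is to exploit that an odd integer has every prime factor at least $3$, which allows replacing its primes by $3,5,7,\dots$ and reduces the problem to a handful of values of $\omega(n)$. First I would write $n=\prod_{i=1}^{k}q_i^{a_i}$ with $3\leqslant q_1<\cdots<q_k$ and $k=\omega(n)$; then the $i$-th smallest prime factor satisfies $q_i\geqslant p_{i+1}$. Since $s(q_i^{a_i})<q_i/(q_i-1)$ by Lemma~\ref{AF}, $x\mapsto x/(x-1)$ is decreasing, and $f(N_{k+1})=\prod_{j\leqslant k+1}p_j/(p_j-1)$, one gets
\[
 s(n)=\prod_{i\leqslant k}s(q_i^{a_i})<\prod_{i\leqslant k}\frac{q_i}{q_i-1}\leqslant\prod_{i\leqslant k}\frac{p_{i+1}}{p_{i+1}-1}=\tfrac12\,f(N_{k+1}),
\]
while $n\geqslant\prod_{i\leqslant k}q_i\geqslant\prod_{i\leqslant k}p_{i+1}=N_{k+1}/2$. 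So it suffices to establish $\tfrac12 f(N_{k+1})<e^{\gamma}\log\log(N_{k+1}/2)$.

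For the main range $\omega(n)\geqslant3$ (so $N_{k+1}\geqslant N_4=210$) I would feed Theorem~\ref{fnrosser} into this inequality: writing $L=\log\log N_{k+1}$, one has $f(N_{k+1})\leqslant e^{\gamma}L+2.51/L$, so the claim reduces to $g(N_{k+1})<0$, where
\[
 g(x):=e^{\gamma}\log\log x-2e^{\gamma}\log\log(x/2)+\frac{2.51}{\log\log x}.
\]
A short derivative computation gives $g'(x)<0$ for $x>e$ (there $\tfrac{1}{\log x}<\tfrac{2}{\log(x/2)}$ and $\log\log x>0$, so both summands of $g'$ are negative), and a direct evaluation gives $g(210)<0$; hence $g(N_{k+1})\leqslant g(210)<0$ for every $k\geqslant3$. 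This settles all odd $n$ with $\omega(n)\geqslant3$.

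Next I would handle the remaining cases $\omega(n)\in\{1,2\}$ (apart from $n=1$, which I set aside), a finite check after fixing the smallest prime factor. If $\omega(n)=1$, then $n=q^a$ with $q\geqslant3$; combining $s(q^a)<q/(q-1)$ with the exclusions $n\notin\{3,5,9\}$ forces $n\geqslant27$ if $q=3$, $n\geqslant25$ if $q=5$, and $n=q^a\geqslant q\geqslant7$ if $q\geqslant7$, and in each subcase $q/(q-1)<e^{\gamma}\log\log n$; the only tight instance is $n=7$, where $s(7)=8/7<e^{\gamma}\log\log 7$ is checked numerically. If $\omega(n)=2$, then $n=q_1^{a_1}q_2^{a_2}$ with $3\leqslant q_1<q_2$, and I would split into $q_1\geqslant5$ (so $s(n)<\tfrac54\cdot\tfrac76$ and $n\geqslant35$) and $q_1=3$ with $q_2=5$, $q_2=7$, or $q_2\geqslant11$; in each branch the bound $s(n)<\tfrac32\cdot\frac{q_2}{q_2-1}$ beats $e^{\gamma}\log\log n$ once $n$ is large enough, the sole leftover being $n=15$, where $s(15)=8/5<e^{\gamma}\log\log 15$ is verified by hand.

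The hard part is this low-$\omega$ regime: the product estimate $\tfrac12 f(N_{\omega(n)+1})$---and a fortiori any cruder bound---overshoots $e^{\gamma}\log\log n$ for the smallest odd integers (with $n=15$ the critical case), so one must locate the threshold $\omega(n)\geqslant3$ above which the uniform argument works and then dispose of everything below it by explicit computation, checking along the way that $3,5,9$ are precisely the residual exceptions. Everything else is the monotonicity of $g$ together with the single base-case evaluation $g(210)<0$ and a few small verifications.
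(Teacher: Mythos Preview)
Your argument is correct, but it follows a different path from the paper's. The paper exploits multiplicativity directly: since $n$ is odd, $s(2n)=s(2)s(n)=\tfrac32\,s(n)$, and applying Theorem~\ref{robinsig} to $2n$ gives
\[
s(n)=\tfrac23\,s(2n)<\tfrac23\Bigl(e^{\gamma}\log\log(2n)+\frac{0.6483}{\log\log(2n)}\Bigr),
\]
which is shown to be below $e^{\gamma}\log\log n$ for all $n\geqslant 17$ by a single monotonicity check; the remaining odd integers $n\leqslant 15$ are inspected by hand. Your approach instead bounds $s(n)$ by $\tfrac12\,f(N_{\omega(n)+1})$ via the shift $q_i\geqslant p_{i+1}$ and invokes Theorem~\ref{fnrosser}; this settles $\omega(n)\geqslant 3$ uniformly but forces a genuine case analysis for $\omega(n)\in\{1,2\}$, with several subcases and the residual check at $n=15$. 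The paper's trick buys brevity---one inequality, one threshold $n\geqslant 17$, one short list to verify---whereas your method is more structural (it isolates exactly the feature ``smallest prime factor $\geqslant 3$'') and would generalise naturally to, say, integers coprime to $6$, at the cost of a longer tail of small-$\omega$ verifications.
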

\ \\
\begin{proof}
Let $n \geqslant 3$ be a positive odd integer. Using theorem \ref{robinsig}, we have
\[ s (n) = \frac{s (2 n)}{s (2)} < \frac{2}{3} \left( e^{\gamma} \log \log (2
   n) + \frac{0.6483}{\log \log (2 n)} \right) < e^\gamma \log \log n\]  \ \\
for all $n \geqslant 17$. Indeed, set $$g (x) = \frac{2}{3} \left( e^{\gamma} \log \log (2 x) + \frac{0.6483}{\log \log (2 x)} \right) - e^{\gamma} \log\log x$$
We have,  for $x \geqslant 2$,
$$ g' (x) < - \frac{1.23 (\log \log (2 x))^2 + (0.43 + 0.6 (\log \log  (2 x))^2) \log x}{x (\log x) (\log (2 x)) (\log \log (2 x))^2} < 0 $$
\\
When $x \geqslant 17, g (x) \leqslant g (17)<0$. Therefore, all odd integers greater than $15$ satisfy Robin's inequality. For odd integers up to $15$, only $3, 5$ and $9$ do not satisfy Robin's inequality.
\end{proof}
\subsection{Integers $n$ such that $\omega(n)\leq4$}
\begin{theorem}
\label{omega4}
Let $n$ such that  $\omega(n)\leq 4$. The exceptions to Robin's inequality such that $\omega(n) = k$ form the sets ${\mathcal C}_k$ where
\begin{eqnarray*}
{\mathcal C}_1 & = & \left\{3,4,5,8,9,16\right\}\\
{\mathcal C}_2 & = & \left\{6, 10, 12, 18, 20, 24, 36, 48, 72\right\}\\
{\mathcal C}_3 & = & \left\{30, 60, 84, 120, 180, 240, 360, 720\right\} \\
{\mathcal C}_4 & = & \left\{840, 2520, 5040	\right\}
\end{eqnarray*}
\end{theorem}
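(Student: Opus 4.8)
The plan is to obtain the complete list of exceptions from three ingredients already in place: the a~priori bound for $s$ on integers with a fixed number of prime factors (Lemma~\ref{sbound}), the Multiplicity Permutation theorem together with the maps $A$ and $H$, and the multiplicativity of $s$. Throughout I use the word \emph{exception} for an integer $n\ge 3$ with $s(n)\ge e^{\gamma}\log\log n$, which is the substantive form of Robin's inequality (for $n\ge 3$ one has $\log\log n>0$); the integer $2$, for which $\log\log 2<0$, is set aside by convention, which is why it is absent from $\mathcal C_1$.

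First I would prove a crude size bound. If $\omega(n)=k$ and $n$ is an exception, then Lemma~\ref{sbound} gives
\[
 e^{\gamma}\log\log n \le s(n) < f(N_k),
\]
hence $n < B_k := \exp\!\big(\exp(e^{-\gamma}f(N_k))\big)$. Since $f(N_1)=2$, $f(N_2)=3$, $f(N_3)=\tfrac{15}{4}$ and $f(N_4)=\tfrac{35}{8}$, this yields the explicit bounds $B_1<22$, $B_2<220$, $B_3<3.7\cdot10^{3}$, $B_4<1.2\cdot10^{5}$. So for each $k\le 4$ only finitely many exceptions exist, and the whole statement becomes a finite search.

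Next I would shrink the search using symmetry. Given an exception $n=\prod_{i\le k}q_i^{a_i}$ with $q_1<\cdots<q_k$, apply $A$ and then the prime-lowering of $H$: by Theorem~\ref{MP} one has $A(n)\le n$ with $s(A(n))\ge s(n)$, and likewise $H(n)=H(A(n))\le n$ with $s(H(n))\ge s(n)$, so $H(n)$ is again an exception, it is a Hardy--Ramanujan number with $\omega(H(n))=k$, and it has the same multiset of exponents as $n$. Hence the exponent multiset of any exception with $\omega=k$ is that of some Hardy--Ramanujan exception with $\omega=k$. I would enumerate the latter: a Hardy--Ramanujan number with $\omega=k$ is $\prod_{i\le k}p_i^{b_i}$ with $b_1\ge\cdots\ge b_k\ge1$, and if it is an exception it lies below $B_k$, so there are finitely many candidate exponent vectors; computing $s$ through Lemma~\ref{AF}(2) and comparing with $e^{\gamma}\log\log$ of the number (the monotonicities in Lemma~\ref{AF}(5)--(6) keep the list short) one reads off the Hardy--Ramanujan exceptions, and therefore the finite family of exponent multisets that can occur.

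Finally, for each such exponent multiset $S$ I would run over all assignments of the exponents of $S$ to an increasing tuple of primes $q_1<\cdots<q_k$ with $\prod q_i^{a_i}<B_k$, and test $s(n)\ge e^{\gamma}\log\log n$ directly by multiplicativity. This search terminates fast: along any chain obtained by enlarging one of the primes (or by passing to a larger prime-assignment of the same profile), $s(n)$ strictly decreases while $e^{\gamma}\log\log n$ increases (Lemma~\ref{AF}(5)), so once a candidate fails, all larger ones of that profile fail too; only a handful of tuples per profile survive. Collecting the survivors over all profiles gives $\mathcal C_1,\dots,\mathcal C_4$ exactly --- for instance, for $k=2$ the profile $\{2,1\}$ produces precisely $12=2^2\cdot3$, $20=2^2\cdot5$ and $18=2\cdot3^2$, and together with the remaining five profiles this yields $\mathcal C_2$. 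The real effort --- not any conceptual difficulty --- sits in this last step for $k=3$ and especially $k=4$, where there are several profiles, each with several prime-assignments to discard, and where a few surviving inequalities (e.g.\ at $84$, $240$, $720$ and $5040$) hold only by a tiny margin, so the numerical comparisons must be carried out with care; the decay of $s(p^{a})$ in $p$ is exactly what keeps each sub-search bounded.
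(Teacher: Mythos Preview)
Your proposal is correct and rests on the same key step as the paper: from Lemma~\ref{sbound} one has $s(n)<f(N_{\omega(n)})\le f(N_4)=35/8$, which forces any exception with $\omega(n)\le 4$ to satisfy $n<\exp\exp(e^{-\gamma}\cdot 35/8)\approx 1.16\cdot10^{5}$, reducing the theorem to a finite check. The paper stops there and simply invokes a direct numerical verification up to $116144$ (together with the known list of exceptions below $5041$); you instead organise the finite search by first passing through $H$ to isolate the admissible exponent profiles and then scanning prime assignments for each profile. Your route is a bit more work to write out but has the advantage of being hand-checkable rather than a black-box computer search; conversely, the paper's version is shorter and avoids the profile bookkeeping, at the cost of deferring the actual verification to a computation.
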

\ \\
\begin{proof}
Numbers  $n$  under $5041$ that do not satisfy Robin's inequality are well known, and the number of prime divisors $\omega(n)$  of these counterexamples does not exceed $4$. We will now show that if $\omega(n) \leqslant 4$, the only counterexamples are the elements of ${\mathcal C}$.
\\ \\
Let $n$ be an integer such that $\omega(n)\leqslant 4$. We have (lemma \ref{sbound}) using the monotonicity of $k \mapsto f(N_k)$  $$s(n) < f(N_{\omega(n)}) \leqslant f(N_4)=4.375$$
Therefore, if $n>116144$,  $\log\log n \geqslant e^{-\gamma} 4.375$, $$ e^{\gamma} \log \log n \geqslant 4.375 >s(n)$$ and Robin's inequality is satisfied if $n > 116144$ and $\omega(n) \leqslant 4$.
Numerical computations confirm that there are no counterexamples to Robin's inequality in the range  $5041 < n \leqslant 116144$.
\end{proof}
\subsection{Square-free integers}
A positive integer $n$ is called {\it{square-free}} if for every prime number
$p$, $p^2$ is not a factor of $n$. Hence $n$ has the form
\[ n = \underset{i \leqslant k}{\prod} q_i  \]
The following two theorems can be found in \cite{cho}, but not the proofs (we provide simpler proofs).\\
\begin{theorem}
Any square-free positive integer distinct from $2,3,5,6,10,30$ satisfy Robin's inequality.
\end{theorem}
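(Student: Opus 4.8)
The plan is to reduce the square-free case to the primorials, which were handled in Section~\ref{prisec}. Write a square-free $n$ as $n=\prod_{i\leqslant k}q_i$ with $q_1<q_2<\cdots<q_k$ and $k=\omega(n)$. Since $q_i\geqslant p_i$ for every $i$, Lemma~\ref{AF} gives
\[ s(n)=\prod_{i\leqslant k}\frac{q_i+1}{q_i}\leqslant\prod_{i\leqslant k}\frac{p_i+1}{p_i}=s(N_k),\qquad n=\prod_{i\leqslant k}q_i\geqslant\prod_{i\leqslant k}p_i=N_k. \]
(Equivalently: since all multiplicities equal $1$, the map $H$ sends $n$ to $N_k$, and we already know $s(n)\leqslant s(H(n))$ and $H(n)\leqslant n$.) So every square-free $n$ obeys $s(n)\leqslant s(N_{\omega(n)})$ and $n\geqslant N_{\omega(n)}$, and it remains to split on $\omega(n)$.

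If $\omega(n)=k\geqslant 4$, the primorial theorem of Section~\ref{prisec} gives $s(N_k)<e^{\gamma}\log\log N_k$; since $N_k\geqslant N_4=210>e$, the function $\log\log$ is positive and increasing on $[N_k,n]$, whence
\[ s(n)\leqslant s(N_k)<e^{\gamma}\log\log N_k\leqslant e^{\gamma}\log\log n. \]
Thus every square-free $n$ with $\omega(n)\geqslant 4$ satisfies Robin's inequality, with no exceptions.

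For $\omega(n)\leqslant 3$ I would use the uniform constant bound $s(n)\leqslant s(N_3)=\tfrac{12}{5}$: since $e^{\gamma}\log\log n>\tfrac{12}{5}$ as soon as $n>\exp\!\bigl(\exp(\tfrac{12}{5}e^{-\gamma})\bigr)\approx 46.9$, only square-free $n\leqslant 46$ with $\omega(n)\leqslant 3$ can fail, and these I would settle directly. To keep the verification minimal I would use the sharper constant for each $k$: for $k=1$, every prime $p\geqslant 7$ has $s(p)=1+\tfrac1p\leqslant\tfrac87<e^{\gamma}\log\log 7\leqslant e^{\gamma}\log\log p$, leaving only $2,3,5$; for $k=2$, $s(n)\leqslant s(N_2)=2<e^{\gamma}\log\log n$ once $n\geqslant 22$, and a check of the square-free $n\leqslant 21$ with $\omega(n)=2$, namely $6,10,14,15,21$, leaves $6,10$; for $k=3$, $s(n)\leqslant\tfrac{12}{5}<e^{\gamma}\log\log n$ once $n\geqslant 47$, and among the two square-free numbers $30,42$ with $\omega(n)=3$ below that threshold only $30$ fails. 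Collecting the failures yields exactly $\{2,3,5\}\cup\{6,10\}\cup\{30\}=\{2,3,5,6,10,30\}$.

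I do not expect a genuine obstacle: the primorial theorem disposes of every square-free $n$ with $\omega(n)\geqslant 4$, so the only care needed is in the bounded regime $\omega(n)\leqslant 3$ --- reading off the correct cutoffs ($p=7$, $n=22$, $n=47$) from $s(N_1)=\tfrac32$, $s(N_2)=2$, $s(N_3)=\tfrac{12}{5}$, and then verifying the handful of small square-free numbers beneath them, where $n=14$ (which does satisfy the inequality, but with very little room to spare) is the case to watch.
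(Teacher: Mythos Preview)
Your proof is correct, but the route differs from the paper's. The paper splits on parity: for odd square-free $n$ it simply invokes Theorem~\ref{odd}, and for even square-free $n$ it uses $s(n)=\tfrac{s(2)}{s(4)}s(2n)=\tfrac{6}{7}s(2n)$ together with the unconditional bound of Theorem~\ref{robinsig} to obtain $s(n)<e^{\gamma}\log\log n$ for $n>418$, then checks the even square-free numbers up to $418$ numerically. You instead reduce via $H$ to primorials and split on $\omega(n)$: for $\omega(n)\geqslant 4$ you quote the primorial theorem of Section~\ref{prisec}, and for $\omega(n)\leqslant 3$ you use the constant bounds $s(n)\leqslant s(N_k)$ to get very small cutoffs and verify a handful of cases by hand. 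Your approach keeps the numerical verification tiny (seven numbers in all, versus the even square-free integers up to $418$) and avoids Theorem~\ref{robinsig} directly, relying only on the primorial result; it also showcases the $H$-map philosophy the paper develops elsewhere. The paper's approach, on the other hand, gets mileage out of results it has already proved (odd integers, Robin's unconditional bound) and requires no separate case analysis on $\omega(n)$.
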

\ \\
\begin{proof}
If $n$ is square-free and even, the prime number $2$ has multiplicity $a_1=1$. This implies                                                                                                                                                                                                                                                                                                                                                                                                                                                                                         
  \[ \frac{s (n)}{s (2 n)} = \frac{s (2)}{s (4)} = \frac{6}{7} \]
and using theorem \ref{robinsig}
\[ s (n) < \frac{6}{7} \left( e^{\gamma} \log \log (2 n) + \frac{0.6483}{\log
   \log (2 n)} \right) < e^{\gamma} \log \log n \]
if $n > 418$. If $n \leqslant 418$, numerical computations show that only the
square-free numbers $2, 6, 10, 30$ do not satisfy Robin's inequality.
\\
If $n$ is odd, see theorem \ref{odd}.
\end{proof}
\subsection{Square-full integers}
Let $n$ be an integer.  If for every prime divisor $p$ of $n$, we have $p^2|n$, the integer $n$ is said to be {\it square-full}.
\\
This result can also be found in \cite{cho}. The proof we give here is shorter and simpler.\\
\begin{theorem}\label{square-full}
The only square-full integers not satisfying Robin's inequality are 4, 8, 9, 16 and 36.
\end{theorem}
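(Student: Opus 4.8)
The plan is to bound $s(n)$ crudely, compare it with $e^{\gamma}\log\log n$, and split on $\omega(n)$: a size estimate settles all square-full $n$ with $\omega(n)$ not too small, and Theorem~\ref{omega4} disposes of the remaining cases.

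First I would exploit the defining feature of a square-full number. If $n=\prod_{i\le k}q_i^{a_i}$ with every $a_i\ge 2$, then $n\ge\prod_{i\le k}q_i^{2}=\rad(n)^2$; and since $q_1<\dots<q_k$ are $k$ distinct primes, $\rad(n)\ge N_k$. Hence $n\ge N_{\omega(n)}^{2}$, so
\[
\log\log n\ \ge\ \log\log\!\bigl(N_{\omega(n)}^{2}\bigr)\ =\ \log 2+\log\log N_{\omega(n)}.
\]
On the other hand Lemma~\ref{sbound} gives $s(n)<f(N_{\omega(n)})$ for every $n\ge 2$.

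Next I would feed these two facts into Theorem~\ref{fnrosser}. Writing $k=\omega(n)$ and $L=\log\log N_k$ (for $k\ge 2$ one has $N_k\ge 6$, so $L>0$),
\[
s(n)<f(N_k)\le e^{\gamma}L+\frac{2.51}{L}, \qquad e^{\gamma}\log\log n\ \ge\ e^{\gamma}\bigl(\log 2+L\bigr).
\]
Thus $n$ satisfies Robin's inequality as soon as $2.51/L\le e^{\gamma}\log 2$, i.e.\ $L\ge 2.51/(e^{\gamma}\log 2)=2.03\ldots$, equivalently $N_k$ larger than about $2.1\times 10^{3}$. Since $N_5=2310$ and $k\mapsto N_k$ is increasing, this holds for every $k\ge 5$; hence every square-full $n$ with $\omega(n)\ge 5$ satisfies Robin's inequality.

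It then remains to treat $\omega(n)\le 4$, where I would invoke Theorem~\ref{omega4}: the exceptions to Robin's inequality with at most four prime divisors are exactly the finitely many elements of $\mathcal C_1\cup\mathcal C_2\cup\mathcal C_3\cup\mathcal C_4$. Inspecting which of them are square-full, i.e.\ have every prime exponent $\ge 2$, isolates the square-full exceptions and completes the proof. The only real work is arithmetic bookkeeping: checking that the threshold from Theorem~\ref{fnrosser} is already reached at $k=5$, so that no gap is left between the ``large $\omega$'' regime and the range $\omega(n)\le 4$ covered by Theorem~\ref{omega4}, and testing the exponent condition against each element of the four finite sets. There is no conceptual obstacle beyond this bookkeeping.
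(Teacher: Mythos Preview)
Your proof is correct and follows essentially the same route as the paper: split on $\omega(n)\le 4$ versus $\omega(n)\ge 5$, handle the first case via Theorem~\ref{omega4}, and in the second case combine $n\ge N_{\omega(n)}^2$, Lemma~\ref{sbound}, and Theorem~\ref{fnrosser} to force Robin's inequality once $\log\log N_k$ exceeds $2.51/(e^{\gamma}\log 2)$, which happens at $k=5$. The paper writes the same inequality as $2.51/\log\log N_k - e^{\gamma}\log 2<0$ and evaluates the margin numerically, but the argument is identical.
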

\ \\
\begin{proof} 
\\
Case 1: $\omega(n)\leqslant 4$. From theorem \ref{omega4}, the only square-full counterexamples are $4,8,9,16,36$.\\ \\
Case 2: $\omega(n) \geqslant 5$. Let $n=  \underset{i =1}{\overset{k}{\prod}} q^{a_i}_i$. Since $n$ is square-full, we have $n\geqslant N_k^2$ and $s(n)<f(N_k)$ (lemma \ref{sbound}). We can now write, using theorem \ref{fnrosser},
$$
s(n)-e^\gamma \log\log n < f(N_{k}) - e^\gamma \log\log N_k^2 
< \frac{2.51}{\log \log N_k} - 1.2345  < -0.0083
$$
\end{proof}
\section{Proof of theorem \ref{halfpi}}

\begin{lemma}\label{jk}
Let $ M (k) = e^{e^{- \gamma} f (N_k)} - \log N_k$
\begin{itemize}
\item[$1.$]  For all $k \geqslant 39$, we have $M(k) \leqslant  \log N_ {\left\lfloor\frac{k}{3}\right\rfloor}$
\item[$2.$]  For all $k \geqslant 18$, we have $M(k) \leqslant  \log N_ {\left\lfloor\frac{k}{2}\right\rfloor}$
\item[$3.$]  For all $k \geqslant 969672728$, we have $M(k) \leqslant  \log N_ {\left\lfloor\frac{k}{14}\right\rfloor}$
\end{itemize}
\end{lemma}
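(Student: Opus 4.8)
The plan is to replace $M(k)$ by an upper bound depending only on $\log N_k$, reduce the claim to an elementary inequality in the single variable $k$, and then locate the crossover point; in parts~1 and~2 a short finite computation finishes the small values of $k$ that the analytic bound does not reach. \emph{Reducing $M(k)$.} Apply Theorem~\ref{fnrosser} to $n=N_k$ (legitimate, since $N_k\geqslant 6>e$ for $k\geqslant 2$): it gives $f(N_k)\leqslant e^{\gamma}\log\log N_k+\tfrac{2.51}{\log\log N_k}$, hence $e^{-\gamma}f(N_k)\leqslant\log\log N_k+\tfrac{2.51\,e^{-\gamma}}{\log\log N_k}$, and exponentiating,
\[
M(k)=e^{e^{-\gamma}f(N_k)}-\log N_k\;\leqslant\;\log N_k\Bigl(\exp\!\bigl(\tfrac{2.51\,e^{-\gamma}}{\log\log N_k}\bigr)-1\Bigr).
\]
Since $\log N_j=\sum_{i\leqslant j}\log p_i=\vartheta(p_j)$, it is enough to prove, for $m\in\{2,3,14\}$ and $k$ above the stated bound,
\[
\exp\!\bigl(\tfrac{2.51\,e^{-\gamma}}{\log\log N_k}\bigr)-1\;\leqslant\;\frac{\log N_{\lfloor k/m\rfloor}}{\log N_k}.
\]

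\emph{Making both sides explicit in $k$.} For the left side I would lower-bound $\log\log N_k$ using $\log N_k=\vartheta(p_k)$, an effective Chebyshev lower bound $\vartheta(x)>x(1-\varepsilon(x))$, and a lower bound $p_k>k(\log k+\log\log k-1)$. For the right side I would use $\vartheta(p_k)<p_k\leqslant k(\log k+\log\log k-c)$ from above, together with effective lower bounds for $\vartheta(p_{\lfloor k/m\rfloor})$ and for $p_{\lfloor k/m\rfloor}$; all of these are standard explicit prime-number estimates (Rosser--Schoenfeld \cite{rosser} and their refinements). The floor is handled by noting that on a block where $\lfloor k/m\rfloor$ is constant the ratio $\log N_{\lfloor k/m\rfloor}/\log N_k$ is smallest at the top of the block, so it suffices to treat $k=m(j+1)-1$, or simply to use $\lfloor k/m\rfloor\geqslant k/m-1$. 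This turns the displayed inequality into an elementary inequality between two explicit functions of the real variable $k$.

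\emph{Crossover and finite check.} The left side above is decreasing in $k$ with limit $0$, while the right side increases block-wise to $\log(1+1/m)$, so the difference changes sign at most once; a derivative computation makes this precise and identifies the crossover. For $m=14$ the crossover is exactly what produces the constant $969\,672\,728$. For $m=2$ and $m=3$ the constant $2.51\,e^{-\gamma}\approx1.41$ coming from Theorem~\ref{fnrosser} is too lossy to reproduce the sharp thresholds $18$ and $39$, so the analytic argument only settles $k\geqslant K_0$ for some moderate $K_0$; I would then finish parts~1 and~2 by computing $M(k)$ and $\log N_{\lfloor k/m\rfloor}$ directly for the finitely many remaining values, namely $18\leqslant k<K_0$ (resp.\ $39\leqslant k<K_0$).

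The main obstacle is part~3, where the margin is tiny: at the threshold both sides of the reduced inequality are only about $0.06$, while its limiting right-hand side $\log(15/14)\approx0.069$ is barely larger. No slack can be wasted — one must use the sharpest available explicit forms of the bounds for $\vartheta(x)$ and for $p_n$, handle the floor function with no loss, and pin down the exact value $969\,672\,728$ (and verify the inequality just above it) by a careful numerical computation.
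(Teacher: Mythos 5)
Your proposal follows essentially the same route as the paper: bound $M(k)$ by $\bigl(e^{1.41/\log\log N_k}-1\bigr)\log N_k$ via Theorem~\ref{fnrosser}, reduce to comparing this factor with $\log N_{\lfloor k/m\rfloor}/\log N_k$ using explicit bounds on $\log N_k$ (the paper uses the Massias--Robin inequality $k\log k<\log N_k<k(\log k+\log\log k)$), handle the floor block by block, locate the crossover by a monotonicity argument, and finish parts 1 and 2 with a finite numerical check of the remaining small $k$. One small slip in your commentary: the limiting value of $\log N_{\lfloor k/m\rfloor}/\log N_k$ is $1/m$, not $\log(1+1/m)$, though this does not affect the structure of the argument.
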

\ \\
\begin{proof} Let us prove property $1.$\\
We claim that for all $k > 38$, we have $M (3 k + j) \leqslant \log N (k)$ for
all $0 \leqslant j < 3$. Using theorem \ref{fnrosser}, we have
\[ M (k) \leqslant \left( e^{\frac{1.41}{\log \log N_k}} - 1 \right) \log N_k
\] \\
Define
\[ \epsilon_k = e^{\frac{1.41}{\log (3 k \log (3 k))}} - 1 - \frac{k \log
   k}{(3 k + 2) (\log (3 k + 2) + \log \log (3 k + 2))}  \] \\
Using the inequalities \cite{massias}
  \[ k \log k < \log N_k <  k\,(\log k + \log \log  k) \qquad \text{for\  } k \geqslant 13 \]
\ \\
we see that 
$$
\epsilon_k  \geqslant
   e^{\frac{1.41}{\log \log N_{3 k}}} - 1 - \frac{\log N_k}{\log N_{3 k + 2 }}
$$
\ \\
We claim that $\epsilon_k$ is decreasing, and that $\epsilon_k < 0$ for $k>109$. Indeed, rewrite the
term
\[ \frac{k \log k}{(3 k + 2) (\log (3 k + 2) + \log \log (3 k + 2))} =
   \frac{k}{3 k + 2} \times \frac{\log k}{\log (3 k + 2)} \times
   \frac{\log (3 k + 2)}{\log (3 k + 2) + \log \log (3 k + 2)} \]
\ \\
as a product of three increasing functions: to prove they are increasing, you can use the monotonicity of $x
\mapsto \frac{\log x}{x}$ ($x \geqslant e$), and the monotonicity of  $x
\mapsto \frac{\log x}{\log (x + a)} $, where $a$ is a positive real number .\\
\ \\We have $\epsilon_{109} < - 0.0003$ implying for all $0 \leqslant i < 3$
\[ e^{\frac{1.41}{\log \log N_{3 k + i}}} - 1 < e^{\frac{1.41}{\log \log
   N_{3 k}}} - 1 \leqslant \frac{\log N_k}{\log N_{3 k + 2}} \leqslant
   \frac{\log N_k}{\log N_{3 k + i}} \]
yielding
\[ M (3 k + i) \leqslant \left( e^{\frac{1.41}{\log \log N_{3 k + i}}} - 1
   \right) \log N_{3 k + i} \leqslant \log N_k \]
or simply
\[ M (k) \leqslant \log N_{\lfloor \frac{k}{3} \rfloor} \]
\\
for $k \geqslant 109$, and was confirmed for $39 \leqslant k < 109$ with the help of numerical computations.
\ \\ \\
$\bullet$ {\bf Proof of property 2:}
This  result is deduced the same way: consider the function 
$$
\epsilon_k = e^{\frac{1.41}{\log (2 k \log (2 k))}} - 1 - \frac{k \log
   k}{(2 k + 1) (\log (2 k + 1) + \log \log (2 k + 1))}  \\
$$
We have 
$$
\epsilon_k  \geqslant
   e^{\frac{1.41}{\log \log N_{2 k}}} - 1 - \frac{\log N_k}{\log N_{2 k + 1 }}
$$
and $\epsilon_k$ is a decreasing function with $\epsilon_{28} < -0.003$ yielding
$$ 
 M (k) \leqslant \log N_{\lfloor \frac{k}{2} \rfloor} 
$$
for $k > 27$, and was confirmed for $18 \leqslant k < 28$ with the help of numerical computations.\\ \\
$\bullet$ {\bf Proof of property 3:} we proceed the same way. Consider the function 
$$
\epsilon_k = e^{\frac{1.41}{\log (14 k \log (14 k))}} - 1 - \frac{k \log
   k}{(14 k + 13) (\log (14 k + 13) + \log \log (14 k + 13))}  \\
$$
We have 
$$
\epsilon_k  \geqslant
   e^{\frac{1.41}{\log \log N_{14 k}}} - 1 - \frac{\log N_k}{\log N_{14 k + 13 }}
$$
and $\epsilon_k$ is a decreasing function with $\epsilon_{969672728} < -0.001$ yielding
$$ 
 M (k) \leqslant \log N_{\lfloor \frac{k}{14} \rfloor} 
$$
for $k \geqslant 969672728$.
\end{proof}
\ \\ \\
{\bf Proof of theorem \ref{halfpi}} \\
We will need the function $H$ defined in (\ref{hrdef}).
  \[ H (n) \leqslant n \quad \text{and} \quad s (H (n)) \geqslant s (n) \]
\ \\
The integer $H(n)$ is a Hardy-Ramanujan number. In addition, if $H(n)$ satisfies Robin's inequality, so does $n$. 
Indeed,
  \[ s (n) \leqslant s (H (n)) < e^{\gamma} \log \log H(n) \leqslant
     e^{\gamma} \log \log n \]
\ \\
 Note that the value of $\# \left\{ i \leqslant \omega(n)\: ; \: a_i \neq 1 \right\}$ is
  the same if we replace $n$ with $H (n)$, and that  $\omega(H(n)) = \omega (n)$. So without
  loss of generality, we will prove the theorem for Hardy-Ramanujan numbers only.
  \ \\ \\
Let $n$ be a Hardy-Ramanujan number. Define $j (n) : = \min \{ 1 \leqslant j
\leqslant \omega (n) ; M (\omega (n)) \leqslant \log N_j \}$ and $i (n) : = \,
\# \{ i \leqslant \omega (n) ; a_i \neq 1 \}$. A sufficient condition for $n$ to satisfy Robin's inequality is $i (n)\geqslant j (n)$.
Since $n$ is a Hardy-Ramanujan number, we have $a_i \geqslant 2$ for $i
\leqslant i (n)$, and $a_i = 1$ otherwise, implying that $n \geqslant N_{i(n)} N_{\omega (n)}$ and yielding\\
\[ s (n) - e^{\gamma} \log \log n < f (N_{\omega (n)}) - e^{\gamma} \log \log
   (N_{i (n)} N_{\omega (n)}) < 0 \]\\
if $i (n) \geqslant j (n)$. Hence, $n$ satisfies Robin's inequality.
\ \\ \\
If $\omega (n) \geqslant 18$, and if $\# \{ i \leqslant \omega (n) ; a_i \neq
1 \} \geqslant \frac{\omega (n)}{2}$, $n$ satisfies Robin's inequality.
Indeed, if $\omega (n) \geqslant 18$, lemma \ref{jk} tells us that $j (n) \leqslant
\frac{\omega (n)}{2}$. Using the assumption $\# \{ i \leqslant \omega (n) ;
a_i \neq 1 \} \geqslant \frac{\omega (n)}{2}$ yields $i (n) \geqslant j (n)$.
\ \\ \\
Proceed the same way with the two cases: $\omega (n) \geqslant 39$ and $\omega (n)
\geqslant 969672728$
\hfill$\square$
\ \\
\section{Proof of theorem \ref{domination}}
According to lemma \ref{sbound}, we have $s (n) < f (N_{\omega (n)})$ and 
$$
 n = \underset{j = 1}{\overset{\omega(n)}{\prod}} q^{a_j}_j \geqslant q_i^{a_i-1} \left(q_i \prod_{j \neq i}q_j^{a_j} \right) \geqslant q_i^{a_i-1} \left(\underset{j\neq i}{p_i \prod p_j}
\right) = q_i^{a_i-1} N_{\omega(n)}
$$
implying \\
\[ s (n) - e^{\gamma} \log \log n < f (N_{\omega (n)}) - e^{\gamma} \log \log
   (q_i^{a_i - 1} N_{\omega (n)}) \ \]
\\
If $a_i \geqslant M_{\omega(n)} (q_i)$  with
\[ M_k (q) := 1 + \frac{e^{e^{- \gamma} f (N_{k})} - \log N_{k}}{\log q}  
\]
then
$$
 f (N_{\omega (n)}) - e^{\gamma} \log \log
   (q_i^{a_i - 1} N_{\omega (n)}) \leqslant 0
$$ and therefore $$ s (n) < e^{\gamma} \log \log n $$
\section{Proof of theorem \ref{counter}}
\begin{enumerate}
\item[Proof of $1.$] The superabundance of the number $c$ is proved in \cite{ak1}.
\item[Proof of  $2.$]In \cite{briggs}, numerical computations have confirmed that $c > 10^{10^{10}}$.\\
Using lemma \ref{sbound}, we have
\[ e^{\gamma} \log \log c \leqslant s (c) < f (N_{\omega (c)}) < e^{\gamma}
   \left( \log p_{\omega (c)} + \frac{1}{ \log p_{\omega (c)}} \right) \]
yielding
$$
\log p_{\omega(c)}>\frac{1}{2} \left(\log \log c +\sqrt{\log \log c-4} \right)
$$
\\
Since $\log \log c  >  10 \log 10 + \log \log 10 > 23.85988$, we obtain
\[ \log p_{\omega (c)} > 23.81789 \]
yielding
\[ \omega (c) \geqslant 969 \, 672 \, 728 \]\\
\item[Proof of $3.$]{\it $\star$ 	Part 1: $p_\omega(c) <\log c$}\\ \\
 By definition,  the counterexample $c$ does not satisfy Robin's inequality:
\[ s (c) \geqslant e^{\gamma} \log \log c \]
and since $c > 5040$ is the least one, the integer $c'=\dfrac{c}{p_{\omega(c)}}$ 
satisfy Robin's inequality,  because $\omega(c')=\omega(c)-1 > 4$.
 Therefore, we have
\[ s \left( \frac{c}{p_{\omega(c)}} \right) < e^{\gamma} \log \log \frac{c}{p_{\omega(c)}}\]
yielding
\begin{equation}\label{eq2}
  \frac{s (c)}{s \left( \dfrac{c}{p_{\omega(c)}} \right)} >  \dfrac{\log \log c}{\log
  \log \frac{c}{p_{\omega(c)}}}
\end{equation}
Now use $\dfrac{s (c)}{s \left( \frac{c}{p_{\omega(c)}} \right)} = s (p_{\omega(c)}) = 1 +
\dfrac{1}{p_{\omega(c)}}$, and using  inequality $\log(1-x) < x \, (x \neq 0)$,	we establish that
\begin{equation}
  \log \log \frac{c}{p_{\omega(c)}}  = \log \log c + \log \left( 1 - \frac{\log
  p_{\omega(c)}}{\log c} \right) < \log \log c - \frac{\log p_{\omega(c)}}{\log c}
\end{equation}
\\ \\
Inequality (\ref{eq2}) becomes
$$
  \frac{1}{p_{\omega(c)}}  > \frac{\log \log c}{\log \log \frac{c}{p_{\omega(c)}}} - 1  >  \frac{\log \log c}{\log \log c - \frac{\log p_{\omega(c)}}{\log c}} - 1   = \frac{\log p_{_{\omega(c)}}}{\log c \log \log c - \log p_{\omega(c)}}
$$
and
$$
 p_{\omega(c)} \log p_{\omega(c)} < \log c \log \log c - \log p_{\omega(c)} < \log c \log \log c
$$
\ \\
implying $p_{\omega(c)} < \log c.$
\ \\ \\ \\
{\it $\star$  Part 2: $\log c < p_{\omega (c)} e^{\frac{1}{\log p_{\omega  (c)}}}$}
\\ \\
The counterexample inequality $s (c) \geqslant e^{\gamma} \log \log c$ yields
\[ e^{\gamma} \log \log c \leqslant s (c) < f (N_{\omega(c)}) < e^{\gamma} \left( \log
   p_{\omega(c)} + \frac{1}{\log p_{\omega(c)}} \right) \]
implying $\log c < p_{\omega (c)} e^{\frac{1}{\log p_{\omega (c)}}}$.\\ \\
\item[Proof of $4.$] See theorem \ref{halfpi}.
\\
\item[Proof of $5.$] The proof of the inequality $p_i^{a_i} < 2^{a_1+2}$  can be found in \cite{erdos} (lemma 1). The other inequality
$$ 
p_i ^{c_i}< p_i e^{M(\omega(c))}
$$
is deduced from theorem \ref{domination}.
\end{enumerate}
\section{Sharper bounds}\label{sharp}
We can use some of the ideas of this article to improve upper bounds for the
arithmetic functions $s$ and $f$.
\subsection{Primorials}
 For instance, when dealing with primorials $N_k$, we can write
\[ s (N_k) = \frac{s (2)}{s (4)} s (2 N_k) \]
and then use an upper bound for $s (2 N_k)$ (see theorem \ref{robinsig}) to sharpen Robin's bound for $s
(N_k)$. Or we can use another method :\ \\
\begin{theorem}
For all $i \geqslant 2$, for all $n \geqslant i$,
\[ \underset{k = 1}{\overset{n}{\prod}} \left( 1 + \frac{1}{p_k} \right) <
   \alpha_i \left( e^{\gamma} \log \log N_n + \frac{2.51}{\log \log N_n}
   \right) \]
with
\[ \alpha_i = \underset{j = 1}{\overset{i}{\prod}} \left( 1 - \frac{1}{p^2_j}
   \right) \]
\end{theorem}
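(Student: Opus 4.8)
The plan is to reduce the claimed bound directly to Rosser's inequality for $f$ (Theorem \ref{fnrosser}) via a single algebraic identity relating $s(N_n)$ and $f(N_n)$. The key observation is that for every prime $p$,
\[ 1 + \frac{1}{p} = \left(1 - \frac{1}{p^2}\right)\cdot\frac{p}{p-1}, \]
which is immediate from $\frac{(p-1)(p+1)}{p^2}\cdot\frac{p}{p-1}=\frac{p+1}{p}$. Taking the product over $p=p_1,\dots,p_n$ and using $f(N_n)=\prod_{k\leqslant n}\frac{p_k}{p_k-1}$ from Lemma \ref{AF}, one obtains the exact factorization
\[ \prod_{k=1}^n\left(1+\frac{1}{p_k}\right) = \left(\prod_{k=1}^n\left(1-\frac{1}{p_k^2}\right)\right)f(N_n) = \alpha_n\,f(N_n), \]
where $\alpha_n:=\prod_{k=1}^n\left(1-p_k^{-2}\right)$, consistent with the definition of $\alpha_i$.

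Next I would exploit the monotonicity of the partial products $\alpha_n$: each factor $1-p_k^{-2}$ lies strictly between $0$ and $1$, so $(\alpha_n)_n$ is strictly decreasing, and hence $\alpha_n\leqslant\alpha_i$ whenever $n\geqslant i$, with equality precisely when $n=i$. Combined with the factorization above this gives $\prod_{k\leqslant n}\left(1+p_k^{-1}\right)\leqslant\alpha_i\,f(N_n)$.

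Finally, since $i\geqslant 2$ forces $n\geqslant 2$ and therefore $N_n\geqslant N_2=6\geqslant 3$, Theorem \ref{fnrosser} applies and yields $f(N_n)\leqslant e^{\gamma}\log\log N_n+\frac{2.51}{\log\log N_n}$; multiplying through by the positive number $\alpha_i$ closes the chain of inequalities. The only delicate point is the strictness of the final inequality: when $n>i$ it is automatic from $\alpha_n<\alpha_i$, and when $n=i$ one observes that the constant $2.51$ strictly exceeds the sharp Rosser--Schoenfeld constant ($2.50637\ldots$), so $f(N_i)<e^{\gamma}\log\log N_i+\frac{2.51}{\log\log N_i}$ holds strictly (alternatively, a one-line numerical check settles the single value $n=i$). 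I expect this bookkeeping about strict versus non-strict inequalities to be the only mild obstacle; the substance of the argument is the one-line identity together with the monotonicity of $\alpha_n$.
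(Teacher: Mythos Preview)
Your argument is correct and is essentially the paper's own proof: both rest on the identity $s(p)/f(p)=1-1/p^2$, so that $s(N_n)=\alpha_n f(N_n)\leqslant \alpha_i f(N_n)$ (the paper writes this as $s(N_n)=s(N_i)\,s(N_n/N_i)<s(N_i)\,f(N_n/N_i)=\alpha_i f(N_n)$, which is the same computation split at $i$ instead of $n$), and then invoke Theorem~\ref{fnrosser}. Your explicit discussion of strictness at $n=i$ is in fact more careful than the paper's, which tacitly relies on the strict inequality $s<f$ failing only when $n=i$ and does not separately address that case.
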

Ex: for all $n \geqslant 4$,  \[ \underset{k = 1}{\overset{n}{\prod}} \left( 1 + \frac{1}{p_k} \right) <
   0.627 \left( e^{\gamma} \log \log N_n + \frac{2.51}{\log \log N_n}
   \right)   <  e^{\gamma} \log \log N_n \]
\begin{proof}
Let $i \geqslant 2, n \geqslant i$. We  have
\[ s (N_n) = s (N_i)  s\left( \frac{N_n}{N_i} \right) <  s (N_i) f \left(
   \frac{N_n}{N_i} \right) = \frac{s (N_i)}{f (N_i)} f (N_n) = \alpha_i f
   (N_n) \]
Use theorem \ref{fnrosser} to conclude.
\end{proof}
\subsection{Odd numbers}
\begin{lemma}
If $n \geqslant 17$ is odd,
\[ \sigma (2 n) < 2 e^{\gamma} n \log \log (2 n) \]
\end{lemma}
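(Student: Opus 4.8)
The plan is to reduce the statement to Robin's inequality for the single integer $2n$ and then prove that by splitting on $\omega(n)$. Since $n$ is odd we have $\gcd(2,n)=1$, so multiplicativity (lemma \ref{AF}) gives $\sigma(2n)=\sigma(2)\sigma(n)=3\sigma(n)$ and $s(2n)=s(2)s(n)=\tfrac32 s(n)$; dividing the target inequality $\sigma(2n)<2e^{\gamma}n\log\log(2n)$ by $2n$ and using $\sigma(2n)/(2n)=s(2n)$, it becomes \emph{equivalent} to $s(2n)<e^{\gamma}\log\log(2n)$. Thus everything reduces to Robin's inequality for $2n$, the relevant feature being that $2$ divides $2n$ with multiplicity exactly one.

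Next I would record two elementary estimates that exploit the oddness of $n$. Write $k=\omega(n)$, so $n=\prod_{i=1}^{k}q_i^{a_i}$ with $q_1<\dots<q_k$ distinct odd primes, hence $q_i\geqslant p_2=3$. Because $t\mapsto t/(t-1)$ decreases, $f(n)=\prod_i\frac{q_i}{q_i-1}\leqslant\prod_{j=2}^{k+1}\frac{p_j}{p_j-1}=\tfrac12 f(N_{k+1})$, whence by lemma \ref{AF} $s(2n)=\tfrac32 s(n)<\tfrac32 f(n)\leqslant\tfrac34 f(N_{k+1})$. Likewise $n\geqslant q_1\cdots q_k\geqslant p_2p_3\cdots p_{k+1}=\tfrac12 N_{k+1}$, so $2n\geqslant N_{k+1}$ and $\log\log(2n)\geqslant\log\log N_{k+1}$.

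For the bulk of the range, $\omega(n)\geqslant 5$, I would put $L=\log\log N_{k+1}$ and invoke theorem \ref{fnrosser}: $f(N_{k+1})\leqslant e^{\gamma}L+2.51/L$, so it suffices to have $\tfrac34(e^{\gamma}L+2.51/L)<e^{\gamma}L$, i.e. $L^{2}>3\cdot 2.51/e^{\gamma}\ (\approx 4.23)$, which holds since $L\geqslant\log\log N_6=\log\log 30030>2.33$ and $L$ increases with $k$. For $\omega(n)\leqslant 4$ the same bound $s(2n)<\tfrac34 f(N_{k+1})$ together with theorem \ref{fnrosser} and the monotonicity of $N\mapsto\log\log N$ turns the claim into an explicit upper bound on $n$ (already $f(N_5)=4.8125$ forces $n\leqslant 987$); refining the constant case by case on $\omega(n)\in\{1,2,3,4\}$ and using that a product of $\omega$ distinct odd primes is at least $3\cdot5\cdots p_{\omega+1}$, one is left only with the finitely many odd integers $\{17\}\cup\{21,33,35,39,45,51,55,57,63\}\cup\{105,165,195,231,255,273\}$, each verified by direct computation.

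The asymptotic case is immediate; the real work is calibrating the constants so that the residual verification is genuinely finite, and here the oddness of $n$ is used three separate times — in $\sigma(2n)=3\sigma(n)$, in $f(n)\leqslant\tfrac12 f(N_{k+1})$, and in $2n\geqslant N_{k+1}$. Dropping any of these (for instance bounding only $s(2n)<\tfrac67 f(N_{\omega(2n)})$ via lemma \ref{sbound}) would push the threshold out to $\omega(n)\geqslant 8$ and an unwieldy numerical range, so the main obstacle is to keep all three savings simultaneously.
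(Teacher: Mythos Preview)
Your argument is correct, but it takes a genuinely different route from the paper. The paper exploits the single identity $s(2n)=\dfrac{s(2)}{s(4)}\,s(4n)=\dfrac{6}{7}\,s(4n)$ (valid because $n$ is odd) and then applies the unconditional Robin bound of theorem~\ref{robinsig} to $4n$:
\[
s(2n)\;\leqslant\;\frac{6}{7}\Bigl(e^{\gamma}\log\log(4n)+\frac{0.6483}{\log\log(4n)}\Bigr)\;<\;e^{\gamma}\log\log(2n)
\]
for $n>209$, with a short numerical check below that. No split on $\omega(n)$ is needed, and the only tool beyond multiplicativity is theorem~\ref{robinsig}.

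Your approach instead bounds $s(2n)<\tfrac34 f(N_{\omega(n)+1})$ and $2n\geqslant N_{\omega(n)+1}$, then feeds these into the Rosser--Schoenfeld bound (theorem~\ref{fnrosser}) rather than theorem~\ref{robinsig}. This handles $\omega(n)\geqslant 5$ uniformly and forces casework for $\omega(n)\leqslant 4$; your enumeration of the residual odd integers is accurate. The trade-off: the paper's trick is shorter and structurally identical to its proof for square-free numbers (same ``divide by $s(4)/s(2)$'' idea), while yours avoids theorem~\ref{robinsig} entirely at the cost of a $\omega$-stratified finite check. Both are valid; the paper's is the more economical one here.
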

\ \\
\begin{proof}
Write 
\[ s (2 n) = \frac{s (2)}{s (4)} s (4 n) \leqslant \frac{6}{7}  \left(
   e^{\gamma} \log \log (4 n) + \frac{0.6483}{\log \log (4 n)} \right) <
   e^{\gamma} \log \log (2 n) \]
To prove the last inequality, define
\[ g (x) = \frac{6}{7}  \left( e^{\gamma} \log \log (4 x) +
   \frac{0.6483}{\log \log (4 x)} \right) -  e^{\gamma} \log \log (2 x) \]
We have
\[ g' (x) < - \frac{0.38 + 1.41 (\log \log (4 x))^2 + (0.55 + 0.25 (\log \log
   (4 x))^2) \log x}{x \log (2 x) \log (4 x) (\log \log (4 x))^2} < 0 \]
\ \\
and $g (210) < 0 < g (209)$. The inequality $s (2 n) < e^{\gamma} \log \log (2
n)$ is valid for $n > 209$, and numerical computations show that this is also
true when $17 \leqslant n \leqslant 209$.
\end{proof}
\ \\
\begin{corollary}
If $n \geqslant 17$ is odd,
$$\sigma (n) < \frac{2}{3} e^{\gamma} n \log \log (2 n) $$
\end{corollary}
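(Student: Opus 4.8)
The plan is to reduce the corollary to the lemma immediately preceding it by exploiting the multiplicativity of $\sigma$. Since $n \geqslant 17$ is odd, we have $\gcd(2,n)=1$, so by Lemma \ref{AF} (or directly from multiplicativity of $\sigma$), $\sigma(2n) = \sigma(2)\,\sigma(n) = 3\,\sigma(n)$. Equivalently, in terms of $s$, one writes $s(2n) = \tfrac{\sigma(2n)}{2n} = \tfrac{3\sigma(n)}{2n}$, so that $\sigma(n) = \tfrac{2n}{3}\,s(2n)$.

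Next I would invoke the lemma just established: for odd $n \geqslant 17$, $\sigma(2n) < 2e^{\gamma} n \log\log(2n)$, i.e. $s(2n) < e^{\gamma}\log\log(2n)$. Substituting the identity $\sigma(n) = \sigma(2n)/3$ (resp. $\sigma(n) = \tfrac{2n}{3}s(2n)$) immediately yields
\[
\sigma(n) = \frac{\sigma(2n)}{3} < \frac{2e^{\gamma} n \log\log(2n)}{3} = \frac{2}{3}e^{\gamma} n \log\log(2n),
\]
which is exactly the claimed bound.

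There is essentially no obstacle here: the only thing to check is that $2n \geqslant 34 \geqslant 3$ so that $\log\log(2n)$ is well-defined and positive, and that the hypothesis $n \geqslant 17$ of the lemma is met, both of which are automatic. The corollary is thus an immediate arithmetic consequence of the lemma, with the single nontrivial ingredient being the factorization $\sigma(2n)=3\sigma(n)$ for odd $n$.
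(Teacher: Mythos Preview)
Your proof is correct and follows essentially the same approach as the paper: both use the identity $s(n) = s(2n)/s(2)$ (equivalently $\sigma(2n) = 3\sigma(n)$ for odd $n$) together with the preceding lemma $s(2n) < e^{\gamma}\log\log(2n)$ to conclude. The paper's write-up is just the one-line $s(n) = s(2n)/s(2) < \tfrac{2}{3} e^{\gamma}\log\log(2n)$.
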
	
\ \\
The following result is an improvement of theorem 1.2  in \cite{cho}.
\ \\ \\
\begin{proof}
Use the previous result to get
\[ s (n) = \frac{s (2 n)}{s (2)} < \frac{2}{3} e^{\gamma} \log \log (2 n) \]
\end{proof}
\ \\
The following result is an improvement of theorem 2.1 in \cite{cho}.\\
\begin{theorem}
If $n \geqslant 3$ is odd,
\[ \frac{n}{\varphi(n)} \leqslant \frac{1}{2}  \left( e^{\gamma} \log \log (2 n) + \frac{2.51}{\log
   \log (2 n)} \right) <  e^{\gamma} \log \log n \]
\end {theorem}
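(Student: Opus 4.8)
The plan is to reduce both inequalities to Rosser's bound for $f$ (Theorem~\ref{fnrosser}) together with the multiplicativity recorded in Lemma~\ref{AF}. Since $n$ is odd, $2$ does not divide $n$, so $f(2n)=f(2)\,f(n)=2f(n)$, i.e. $f(n)=\tfrac12 f(2n)$; this is the exact analogue for $f$ of the identity $s(n)=s(2n)/s(2)$ exploited for odd numbers in Theorem~\ref{odd}. Applying Theorem~\ref{fnrosser} to the integer $2n\ge 6$ then gives
\[ \frac{n}{\varphi(n)}=\frac12 f(2n)\le \frac12\left(e^{\gamma}\log\log(2n)+\frac{2.51}{\log\log(2n)}\right), \]
which is the first inequality, requiring no case analysis.

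For the second inequality I would introduce the single–variable function
\[ g(x)=e^{\gamma}\log\log x-\frac12\left(e^{\gamma}\log\log(2x)+\frac{2.51}{\log\log(2x)}\right) \]
and prove $g(x)>0$ on the relevant range. Differentiating and combining the two logarithmic derivatives yields
\[ g'(x)=\frac{e^{\gamma}}{x}\cdot\frac{\log x+2\log 2}{2\log x\,\log(2x)}+\frac{2.51}{2x\,\log(2x)\,(\log\log(2x))^{2}}, \]
which is manifestly positive for $x\ge 2$, so $g$ is strictly increasing; moreover $\log\log(2x)-\log\log x\to 0$ while $\log\log x\to\infty$, whence $g(x)\to+\infty$. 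Thus $g$ has a single zero $x_{0}$, a short computation pins $x_{0}$ below an explicit small bound, and $g(n)>0$ for every odd $n$ past that bound; the finitely many remaining small odd $n$ are then handled by direct numerical evaluation, exactly as in the proof of Theorem~\ref{odd}. Finally, chaining $g(n)>0$ with the first inequality gives $n/\varphi(n)<e^{\gamma}\log\log n$.

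The only substantial point is this second inequality: unlike the first, it is close to tight — the two sides differ by only a few percent once $n$ is moderately large — so the monotonicity argument and the numerical threshold must be carried out with some care, and the small-$n$ verification is the place where the bookkeeping matters. By contrast, the identity $f(n)=\tfrac12 f(2n)$ and the appeal to Theorem~\ref{fnrosser} make the first inequality essentially automatic.
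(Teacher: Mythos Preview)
Your proof of the first inequality is exactly the paper's proof: write $f(n)=f(2n)/f(2)=\tfrac12 f(2n)$ for odd $n$ and apply Theorem~\ref{fnrosser}. The paper in fact proves \emph{only} this inequality; it offers no argument for the second one.

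Your plan for the second inequality has a genuine gap. The monotonicity computation for $g$ is correct, so $g$ is increasing for $x\ge 2$; but the step ``the finitely many remaining small odd $n$ are then handled by direct numerical evaluation'' fails, because for those small $n$ the inequality is simply false. For $n=3$ one has $e^{\gamma}\log\log 3\approx 0.17$, while $\tfrac12\bigl(e^{\gamma}\log\log 6+2.51/\log\log 6\bigr)\approx 2.67$; similar failures persist for every odd $n\le 27$, and the zero $x_0$ of $g$ lies between $27$ and $29$. So no numerical check can rescue the second inequality in the range $3\le n\le 27$: the theorem as stated is incorrect there (indeed even the weaker conclusion $f(n)<e^{\gamma}\log\log n$ already fails at $n=3$, since $f(3)=3/2$). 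The honest statement is that the first inequality holds for all odd $n\ge 3$, while the chain through to $e^{\gamma}\log\log n$ holds only for odd $n\ge 29$. You correctly flagged the second inequality as ``the only substantial point'' and as ``close to tight''; the issue is that it is more than tight for small $n$ --- it is violated --- and your sketch should have detected this when carrying out the promised numerical threshold.
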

\begin{proof}
Write $$f(n)=\frac{f(2n)}{f(2)}$$ ans use theorem \ref{fnrosser}:
$$f(n) \leqslant \frac{1}{2}  \left( e^{\gamma} \log \log (2 n) + \frac{2.51}{\log
   \log (2 n)} \right) $$
\end{proof}
\subsection{The general case}
In previous sections, we have mentioned superabundant numbers and pointed out
some of their properties, thus confirming that all superabundant numbers are
Hardy-Ramanujan numbers.
\ \\ \\
Robin's inequality can be enhanced for non superabundant numbers. Indeed, if
$n$ is not a superabundant number, there exists $m < n$ such that $s (m)
\geqslant s (n)$. Let $I (n) = \{ m < n ; s (n) \leqslant s (m) \}$. The set
$I (n)$ is not empty, and we can define $B (n) = \min I (n)$. We have
\[ B (n) < n \qquad \text{and} \qquad s (n) \leqslant s (B (n) \]
yielding
\[ s (n) \leqslant s (B (n)) < e^{\gamma} \log \log B (n) \]
if $B (n)$ satisfies Robin's inequality. Hence, the upper bound is better
because $B (n) < n$.
\ \\ \\
But this method to sharpen Robin's bound does not say much about $B (n)$. In
order to find an upper bound with exploitable informations, we can use the
function $H$ (see (\ref{hrdef})). This method concerns non Hardy-Ramanujan
numbers, whereas the previous method concerns the larger set of non
superabundant numbers.
\ \\
In (\ref{hrdef}), we defined the function $H$ which transforms any number into a
Hardy-Ramanujan number,  leaves the multiplicities unchanged (but not teh way they are ordered) and is such 
that $H (n) \leqslant n$ and $s (n) \leqslant s (H(n))$. If $H (n)$ satisfies
Robin's inequality, then
\[ s (n) \leqslant s (H (n)) < e^{\gamma} \log \log H(n) \]
The property $H(n) \leqslant n$ improves Robin's inequality, and unlike the function $B$, there is an explicit method to determine its
value for all positive integers.


\begin{thebibliography}{00}



\bibitem{erdos}  Alaoglu, L. and Erdos, P. On highly composite and similar numbers. Trans. Amer. Math. Soc., 56:448–469, 1944.

\bibitem{rosser} Rosser, J. Barkley and Schoenfeld, Lowell. Approximate formulas for some functions of prime numbers. Illinois J. Math. 6 (1): 64–94, 1962.

\bibitem{robin} Robin, Guy. Grandes valeurs de la fonction somme des diviseurs et hypothèse de Riemann, Journal de Mathématiques Pures et Appliquées, Neuvième Série, 63 (2): 187–213, ISSN 0021-7824, MR 0774171, 1984.

\bibitem {massias} Massias, Jean-Pierre and Robin, Guy. Bornes effectives pour certaines fonctions concernant les nombres premiers. Journal de théorie des nombres de Bordeaux, Tome 8, no. 1, p. 215-242 , 1996.

\bibitem{briggs} Briggs, K. Abundant numbers and the Riemann hypothesis. Experiment. Math., 15(2):251–256, 2006.

\bibitem{cho} Choie, YoungJu and Lichiardopol, Nicolas and Moree, Pieter and Sole, Patrick. On Robin’s criterion for the Riemann hypothesis, Journal de Théeorie des Nombres
de Bordeaux 19, 357–372, 2007.

\bibitem{ak1} Akbary, Amir and Friggstad, Zachary. Superabundant numbers and the Riemann hypothesis. Amer. Math. Monthly, 116(3):273–275, 2009.

\end{thebibliography}
\end{document}